 \newtheorem{thm}{Theorem}[section]
 \newtheorem{cor}[thm]{Corollary}
 \newtheorem{lemma}[thm]{Lemma}
 \newtheorem{prop}[thm]{Proposition}
 \theoremstyle{definition}
 \newtheorem{defn}[thm]{Definition}
 \theoremstyle{remark}
 \newtheorem{rem}[thm]{Remark}
 \numberwithin{equation}{subsection}
\newcommand{\TT}{\text{$\mathcal{T}$}}
\newcommand{\BB}{\text{$\mathcal{B}$}}
\newcommand{\MM}{\text{$\mathcal{M}$}}
\newcommand{\LB}{\text{$\Lambda$}}
\newcommand{\sg}{\text{$\sigma$}}
        \newcommand{\field}[1]{\text{$\mathbb{#1}$}}
        \newcommand{\N}{\field{N}}
        \newcommand{\Z}{\field{Z}}
        \newcommand{\R}{\field{R}}
\begin{document}

\title
 {Transverse invariant measures extend to the ambient space}

\author{Carlos Meni\~no Cot\'on}

\address{Departamento de Xeometr\'{\i}a e Topolox\'{\i}a\\
         Facultade de Matem\'aticas\\
         Universidade de Santiago de Compostela\\
         15782 Santiago de Compostela}

\email{carlosmecon@hotmail.com}


\begin{abstract}
It is shown that any transverse invariant measure of a foliated space can be considered as a measure on the ambient
space.
\end{abstract}

\maketitle

\section{Introduction}
Transverse invariant measures of foliated spaces play an important
role in the study of their transverse dynamics. They are measures
on transversals invariant by ho\-lo\-no\-my transformations. There
are many interpretations of transverse invariant measures; in
particular, they can be extended to generalized transversals,
which are defined as Borel sets that meet each leaf in a countable
set \cite{Connes}. Here, we show that indeed invariant measures
can be extended to the \sg-algebra of all Borel sets becoming an
``ambient'' measure (a measure on the ambient space). Precisely,
the following result is proved.

\begin{thm}\label{t:extension medida}
Let $X$ be a foliated space with a transverse invariant measure
\LB. There exists a Borel measure $\widetilde{\LB}$ on $X$ such
that $\widetilde{\LB}(T)=\LB(T)$ for all generalized transversal
$T$.
\end{thm}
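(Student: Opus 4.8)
The plan is to extend $\LB$ in the crudest conceivable way and then to check that this already meets the requirement. Recall (Connes) that $\LB$ extends canonically to a countably additive $[0,\infty]$‑valued measure, still written $\LB$, on the family $\mathcal{C}$ of all \emph{generalized transversals}, i.e.\ Borel subsets of $X$ meeting every leaf in a countable set. The first point to pin down is that $\mathcal{C}$ is a $\sigma$‑ideal inside the Borel $\sigma$‑algebra of $X$: a Borel subset of a member of $\mathcal{C}$ meets each leaf in a subset of a countable set, hence lies in $\mathcal{C}$; and a countable union of members of $\mathcal{C}$ meets each leaf in a countable union of countable sets, hence lies in $\mathcal{C}$. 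So $(\mathcal{C},\LB)$ behaves exactly like a measure carried by a $\sigma$‑ideal of Borel sets.

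Now define, for a Borel set $A\subseteq X$,
\[
 \widetilde{\LB}(A)=
 \begin{cases}
  \LB(A) & \text{if } A\in\mathcal{C},\\[2pt]
  +\infty & \text{if } A\notin\mathcal{C}.
 \end{cases}
\]
Then $\widetilde{\LB}(\emptyset)=\LB(\emptyset)=0$, and by construction $\widetilde{\LB}(T)=\LB(T)$ for every generalized transversal $T$ (in particular for every honest transversal, so $\widetilde{\LB}$ genuinely extends $\LB$). It remains to check countable additivity. Let $A=\bigsqcup_{j}A_{j}$ be a disjoint union of Borel sets. If $A\in\mathcal{C}$, then each $A_{j}$, being a Borel subset of $A$, lies in $\mathcal{C}$, and $\widetilde{\LB}(A)=\LB(A)=\sum_{j}\LB(A_{j})=\sum_{j}\widetilde{\LB}(A_{j})$ by $\sigma$‑additivity of $\LB$ on $\mathcal{C}$. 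If $A\notin\mathcal{C}$, then $A$ meets some leaf in an uncountable set, so at least one $A_{j_{0}}$ does too; hence $A_{j_{0}}\notin\mathcal{C}$ and both $\widetilde{\LB}(A)$ and $\sum_{j}\widetilde{\LB}(A_{j})$ equal $+\infty$. Thus $\widetilde{\LB}$ is a Borel measure with the asserted property.

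The only real work, then, lies in justifying the two inputs used above: that Connes' extension produces a genuinely $\sigma$‑additive $\LB$ on \emph{all} of $\mathcal{C}$, not merely on topological transversals, and that $\mathcal{C}$ enjoys the stated closure properties. I expect the former to be the point demanding the most care. I would also remark that $\widetilde{\LB}$ is in general very far from $\sigma$‑finite — for an irrational linear flow on $\mathbb{T}^{2}$, say, the uncountably many pairwise disjoint closed transversals each receive the same positive mass — so no locally finite (and no $\sigma$‑finite) measure can have the required property; in that sense the crude extension above cannot be avoided, and it is all that the statement requires.
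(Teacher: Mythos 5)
Your construction is correct for the statement as literally written, but it takes a genuinely different and far cheaper route than the paper, and it produces a different measure. Granting Connes' extension of $\LB$ to a $\sigma$-additive map on the family $\TT(X)$ of generalized transversals (an input the statement itself presupposes, and which the paper also takes from \cite{Connes}), your two checks are sound: $\TT(X)$ is closed under Borel subsets and countable unions, and in a countable disjoint decomposition of a set meeting some leaf uncountably, some piece meets that leaf uncountably; hence the measure equal to $\LB$ on $\TT(X)$ and $+\infty$ elsewhere is a Borel measure agreeing with $\LB$ on every generalized transversal. Note, however, that this is essentially the degenerate extension the paper itself exhibits to show non-uniqueness: in the example $\R\times\{\ast\}$ with null $\LB$, your recipe gives the measure that is $0$ on countable sets and $\infty$ on uncountable Borel sets, and in general it violates the paper's coherence condition (b), since it assigns $\infty$ to $\pi^{-1}(S)$ for a nonempty $\LB$-null transversal $S$ (as soon as the plaques are uncountable). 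The paper's proof instead builds the pairing of $\LB$ with the counting measure on the leaves, chart by chart; the real work there is measure-theoretic --- Kallman's projection theorem and the Kunugui--Novikov theorem are needed because projections of Borel sets need not be Borel, plus a chart-independence lemma --- and what it buys is a canonical, geometrically meaningful measure (locally $\LB\times{}$counting) that is the unique coherent extension and is the object intended for later use ($\LB$-LS category). For instance, for the irrational flow on $\mathbb{T}^2$ with non-atomic $\LB$, the paper's measure gives a single leaf measure $0$ while yours gives it $\infty$. Your closing remark is right that no $\sigma$-finite (hence no locally finite) extension exists, but the claim that the crude extension ``cannot be avoided'' overstates it: the coherent extension is also non-$\sigma$-finite yet differs from yours. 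So your proof settles bare existence, but it bypasses rather than reproduces the content the theorem is meant to carry.
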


This $\widetilde{\LB}$ is constructed as a ``pairing'' of the
transverse invariant measure $\LB$ with the counting measure on
the leaves. Connes has proved that this pairing is coherent for
ge\-ne\-ra\-lized transversals but it can not be directly extended
to any Borel set since the local projection of a Borel set is not
necessarily a Borel set \cite{Kechris,Srivastava}. The solution of
this problem is the main difficulty of the proof. In fact, this
result is proved in the more general setting of foliated
measurable spaces \cite{Bermudez,Bermudez-Hector}. The uniqueness of this
extension is also discussed.

The extension $\widetilde{\LB}$ will be used in study of the
concept of {\em \LB-LS category\/}, which will appear elsewhere.

\section{Foliated measurable spaces}

A {\em Polish space \/} is a
completely metrizable and separable topological space. A {\em
standard Borel space\/} is a measurable space isomorphic to a
Borel subset of a Polish space. A {\em measurable topological
space\/} or {\em MT-space\/} $X$ is a set equipped with a
\sg-algebra and a topology. Usually, measure theoretic concepts
will refer to the \sg-algebra of $X$, and topological concepts
will refer to its topology. Notice that the \sg-algebra does not
necessarily agree with the Borel \sg-algebra associated with the
topology. An {\em MT-map\/} between MT-spaces is a measurable
continuous map. An {\em MT-isomorphism} is a map which is a
measurable isomorphism and a homeomorphism, simultaneously.

Let $T$ be a standard Borel space. On $T\times\R^n$, we consider
the \sg-algebra generated by products of Borel subsets of $T$ and
$\R^n$, and the product of the discrete topology on $T$ and the
Euclidean topology on $\R^n$. $T\times\R^n$ will be endowed with
the structure of MT-space defined by this \sg-algebra and this
topology.

A {\em foliated measurable chart} on $X$ is an MT-isomorphism
$\varphi:U\to T\times\R^n$, where $U$ is open and measurable in
$X$. A {\em foliated measurable atlas\/} on $X$ is a countable
family of foliated measurable charts whose domains cover $X$. The
sets $\varphi^{-1}(\{\ast\}\times\R^n)$ are the {\em plaques\/} of
the foliated chart $\varphi$ and the sets
$\varphi^{-1}(T\times\{\ast\})$ are called {\em transversals\/}
induced by $\varphi$. A {\em foliated measurable space\/} is an
MT-space that admits a foliated measurable atlas. Observe that we
always consider countable atlases. The connected components of $X$
are called its {\em leaves\/}. An example of foliated measurable
space is a foliated space with its Borel \sg-algebra and the leaf
topology. According to this definition, the leaves are second
countable connected manifolds but they may not be Hausdorff.

A measurable subset $T\subset X$  is called a {\em generalized
transversal\/} if its intersection with each leaf is countable;
these are slightly more general than the transversals of
\cite{Bermudez}, which are required to have discrete and closed
intersection with each leaf. Let $\TT(X)$ be the set of
generalized transversals of $X$. This set is closed under
countable unions and intersections, but it is not a \sg-algebra.

A {\em measurable holonomy transformation\/} is a measurable
isomorphism $\gamma:T\to~T'$, for $T, T' \in \TT(X)$, which maps
each point to a point in the same leaf. A {\em transverse
invariant measure\/} on $X$ is a \sg-additive map $\LB:\TT(X)\to
[0,\infty]$ which is invariant by measurable holonomy
transformations. The classical definition of transverse invariant
measure in the context of foliated spaces is a measure on
usual transversals invariant by usual
holonomy transformations \cite{CandelConlon}. Both definitions are
equivalent in the case of foliated measurable spaces induced by
foliated spaces \cite{Connes}.

\section{Case of a product foliated measurable
space}\label{s:producto}

In this section, we take foliated measurable spaces of the form
$T\times P$, where $T$ is a standard measurable space and $P$ a
connected, separable and Hausdorff manifold. Indeed, the results
of this section hold when $P$ is any Polish space. We assume that
a new topology is given in this space as follows. All standard
Borel spaces are Borel isomorphic to a finite set, \Z\ or the
interval $[0, 1]$ (see \cite{Srivastava,Takesaki}). Identify
$T\times P$ with $\Z\times P$, $[0, 1]\times P$ or $A\times P$
($A$ finite), via a Borel isomorphism. We work with two topologies in
$T\times P$. On the one hand, the topology of the MT-structure is the product of
discrete topology on $T$ and the topology of $P$. On the other hand, the
topology is the product of the topology of $[0,1]$, $\N$ or $P$ with the topology of $P$; the term ``open set'' is used
with this topology. The \sg-algebra of the
MT-structure on $T\times P$ is generated by these ``open sets''.
Let $\pi: T\times P \to T$ be the first factor projection.

\begin{prop}[R.Kallman \cite{Kallman}]\label{p:sgcompactos}
If $B\subset T\times P$ is a Borel set such that $B\cap
(\{t\}\times P)$ is \sg-compact for all $t\in T$, then $\pi(B)$ is
a Borel set. Moreover there exists a Borel subset $B'\subset B$
such that $\#(B'\cap(\{t\}\times P))=1$ if $B\cap (\{t\}\times
P)\neq\emptyset$, and $\#(B'\cap (\{t\}\times P))=0$ otherwise.
\end{prop}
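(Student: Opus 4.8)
The plan is to recognise this statement as the Arsenin--Kunugui uniformization theorem and to obtain $B'$ from the Borel uniformizing function it provides. First I would, as the preamble to the section already prepares, fix a Borel isomorphism of $T$ with $[0,1]$, $\N$, or a finite set, so that $T\times P$ becomes a standard Borel space; this changes neither hypothesis nor conclusion, since both refer only to the sections $B_t:=\{p\in P:(t,p)\in B\}\subseteq P$ and to $\sigma$-compactness inside $P$. Because $P$ is a manifold it is locally compact, second countable and Polish, so a subset of $P$ is $\sigma$-compact precisely when it is $F_\sigma$ --- a convenient but inessential observation. With this setup the hypothesis on $B$ is exactly that of the Arsenin--Kunugui theorem (see \cite{Kallman,Kechris,Srivastava}): a Borel subset of the product of a standard Borel space and a Polish space whose vertical sections are all $\sigma$-compact. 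That theorem asserts that $\pi(B)$ is Borel and that $B$ admits a Borel uniformization $B^{*}\subseteq B$, i.e.\ the graph of a Borel function on $\pi(B)$; taking $B':=B^{*}$, this $B'$ meets $\{t\}\times P$ in exactly one point when $B_t\neq\emptyset$ and in none when $B_t=\emptyset$, which is what is asserted.

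If a self-contained argument is wanted, the substance lies in the Arsenin--Kunugui theorem itself, which I would prove in two steps. The first step reduces to compact sections: $B=\bigcup_n B_n$ with $B_n$ Borel, $B_n\subseteq B_{n+1}$, and every section $(B_n)_t$ compact. This is a theorem of Saint-Raymond, proved by a transfinite rank analysis of Borel sets with $\sigma$-compact sections, and I expect it to be the main obstacle. (If $B$ is merely $F_\sigma$ in $T\times P$, which is not assumed here, the step is immediate: intersect $B$ with the sets $T\times\overline{G}$ for $G$ in a countable basis of $P$ with compact closure, and take finite unions.) The second step treats a single Borel $C=B_n$ with compact sections: one shows that $\pi(C\cap(T\times K))$ is Borel for every compact $K\subseteq P$, and then runs the standard basis-narrowing: for $t\in\pi(C)$ one selects, least-index-first, a decreasing sequence of basic closed neighbourhoods with diameters tending to $0$, each still meeting $C_t$, depending on $t$ in a Borel way; their intersection is a single point $s_n(t)\in C_t$, so $s_n:\pi(C)\to P$ is a Borel selector for $C$.

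Granting these steps, I assemble the conclusion. Since projection commutes with unions, $\pi(B)=\bigcup_n\pi(B_n)$ is a countable union of Borel sets, hence Borel; this is the first assertion. Now let $A_n:=\pi(B_n)\setminus\bigcup_{k<n}\pi(B_k)$, a Borel partition of $\pi(B)$, and set $B':=\bigcup_n\{(t,s_n(t)):t\in A_n\}$. Each piece is the graph of a Borel map over a Borel set, hence Borel in $T\times P$, and $(t,s_n(t))\in B_n\subseteq B$, so $B'$ is a Borel subset of $B$; and for $t\in\pi(B)$ there is a unique $n$ with $t\in A_n$, so $B'\cap(\{t\}\times P)=\{(t,s_n(t))\}$ is a singleton, while for $t\notin\pi(B)$, equivalently $B_t=\emptyset$, the intersection is empty. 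This yields the required $B'$, establishing the ``moreover'' part.
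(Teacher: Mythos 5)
Your proposal is correct and takes essentially the same approach as the paper: the paper states this result as Kallman's theorem and cites it without proof, and your identification of it with the Arsenin--Kunugui/Saint-Raymond uniformization theorem (projection Borel, Borel uniformizing graph $B'$ assembled from selectors on the compact-section pieces) is precisely the standard content behind that citation. The one step you defer, Saint-Raymond's reduction to compact sections, is exactly what the cited literature supplies, so nothing is missing relative to the paper's treatment.
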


For any measurable space $(X,\MM,\LB)$, the
{\em completion\/} of \MM\ with respect to \LB\ is the \sg-algebra
$$\MM_\LB=\{\,Z\subset X\ |\ \exists\, A,B\in \MM,\ A\subset
Z\subset B,\ \LB(B\setminus A)=0\,\}\;.$$ The measure $\LB$ extends
in a natural way to $\MM_\LB$ by defining $\LB(Z)=\LB(A)=\LB(B)$ for
$Z,A,B$ as above.

Now, let \LB\ be a Borel measure on $T$. Define
$$\pi(\BB^*,\BB_\LB)=\{\,B\subset \BB^*\ |\ \pi(B\cap U)\in \BB_\LB\
\forall\ \text{open}\ U\subset T\times P\,\}\;,$$where $\BB$ and
$\BB^*$ are the Borel \sg-algebras of $T$ and $T\times P$,
respectively.

\begin{prop}\label{p:propiedadunion}
$\pi(\BB^*,\BB_\LB)$ is closed under countable unions.
\end{prop}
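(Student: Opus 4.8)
The plan is to check the two defining conditions of $\pi(\BB^*,\BB_\LB)$ directly for a countable union. So let $\{B_n\}_{n\in\N}$ be a sequence in $\pi(\BB^*,\BB_\LB)$ and put $B=\bigcup_{n\in\N}B_n$. First I would note that $B\in\BB^*$, since $\BB^*$ is a \sg-algebra. Then, fixing an arbitrary open set $U\subset T\times P$, I would use the elementary fact that the set-theoretic image under $\pi$ commutes with unions:
$$\pi(B\cap U)=\pi\Big(\bigcup_{n\in\N}(B_n\cap U)\Big)=\bigcup_{n\in\N}\pi(B_n\cap U)\;.$$
By hypothesis each $\pi(B_n\cap U)$ belongs to $\BB_\LB$, and $\BB_\LB$ is a \sg-algebra — the completion of a \sg-algebra is again a \sg-algebra — hence closed under countable unions. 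Therefore $\pi(B\cap U)\in\BB_\LB$, and since $U$ was an arbitrary open set, $B\in\pi(\BB^*,\BB_\LB)$.

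I do not expect any genuine obstacle here: the only property of $\pi$ that is invoked is that images preserve unions. This is in sharp contrast with intersections and complements, for which $\pi$ is badly behaved — the projection of a Borel set need not be Borel, as recalled in the introduction — and this is exactly why the auxiliary class $\pi(\BB^*,\BB_\LB)$ is singled out rather than working with all of $\BB^*$. For completeness one may also record the degenerate case of the empty union, whose projection is $\emptyset\in\BB_\LB$, so it too causes no trouble.
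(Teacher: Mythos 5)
Your proof is correct and follows exactly the paper's argument: the countable union stays in $\BB^*$, images under $\pi$ commute with unions, and $\BB_\LB$, being a \sg-algebra, is closed under countable unions, so $\pi(B\cap U)\in\BB_\LB$ for every open $U$.
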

\begin{proof}
This is obvious since, for any countable family $\{B_n\}\subset
\BB^*$, we obtain $\bigcup_n B_n\in\BB^*$ and $$\pi\left(\left(\bigcup_n
B_n\right)\cap U\right)=\bigcup_n\pi(B_n\cap U)\in\BB_\LB\;$$ for any open
subset $U\subset T\times P$.
\end{proof}

\begin{rem}\label{r:analytic}
If $\LB$ is \sg-finite ({\em i.e.\/}, $T$ is a countable union of
Borel sets with finite \LB-measure), then
$\pi(\BB^*,\BB_{\LB})=\BB^*$\,: by Exercise 14.6 in
\cite{Kechris}, any set in $\BB^*$ projects onto an analytic set,
which is \LB-measurable since $\LB$ is \sg-finite \cite[Theorem
4.3.1]{Srivastava}.
\end{rem}

\begin{rem}
If $B\in \pi(\BB^*,\BB_\LB)$ and $U$ is an open set, then $B\cap
U\in \pi(\BB^*,\BB_\LB)$. By Proposition \ref{p:sgcompactos},
$\pi(\BB^*,\BB_\LB)$ contains the Borel sets with \sg-compact
intersection with the plaques $\{t\}\times P$.
\end{rem}

Now, we want to
extend \LB\ to all Borel sets satisfying the conditions of a
measure. Let $\BB^{**}$ denote the Borel \sg-algebra of $T\times P\times
T\times P$, $\widetilde{\pi}$ the natural projection $T\times
P\times T\times P\to T\times T$, and
$\langle\BB_\LB\times\BB_\LB\rangle$ the \sg-algebra generated by
sets of the form $A\times B$ for $A,B\in\BB_\LB$.
\begin{lemma}\label{l:productos}
If $B,B'\in \pi(\BB^*,\BB_\LB)$, then $B\times B'\in
\widetilde{\pi}(\BB^{**},\langle\BB_\LB\times\BB_\LB\rangle)$.
\end{lemma}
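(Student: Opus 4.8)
The goal is to show that if $B, B' \in \pi(\BB^*, \BB_\LB)$, then the product $B \times B'$ lies in $\widetilde{\pi}(\BB^{**}, \langle\BB_\LB\times\BB_\LB\rangle)$, i.e.\ that for every open $W \subset T\times P\times T\times P$ the projection $\widetilde{\pi}((B\times B')\cap W)$ belongs to $\langle\BB_\LB\times\BB_\LB\rangle$. First I would reduce to a basic open set: since $\widetilde{\pi}((B\times B')\cap(\bigcup_n W_n)) = \bigcup_n \widetilde{\pi}((B\times B')\cap W_n)$ and $\langle\BB_\LB\times\BB_\LB\rangle$ is a \sg-algebra, it suffices to treat $W = U\times U'$ where $U, U' \subset T\times P$ are open (indeed, one may take $U, U'$ from a countable base consisting of products of basic opens, but basic opens of the stated product form already suffice). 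For such a $W$ one has the clean identity
\[
\widetilde{\pi}\bigl((B\times B')\cap(U\times U')\bigr)=\pi(B\cap U)\times\pi(B'\cap U')\;.
\]

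**Finishing the argument.** Once this identity is in hand, the conclusion is immediate: by hypothesis $\pi(B\cap U)\in\BB_\LB$ and $\pi(B'\cap U')\in\BB_\LB$, so their product is one of the generating sets of $\langle\BB_\LB\times\BB_\LB\rangle$ and hence lies in that \sg-algebra. Running this over a countable base of opens of the form $U\times U'$ and taking countable unions — using that arbitrary open $W$ is a countable union of such basic sets since $T\times P$ is second countable in the relevant topology, and that $\langle\BB_\LB\times\BB_\LB\rangle$ is closed under countable unions — gives $\widetilde{\pi}((B\times B')\cap W)\in\langle\BB_\LB\times\BB_\LB\rangle$ for every open $W$. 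Also $B\times B'\in\BB^{**}$ trivially. Hence $B\times B'\in\widetilde{\pi}(\BB^{**},\langle\BB_\LB\times\BB_\LB\rangle)$.

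**Where the difficulty sits.** Almost everything here is bookkeeping; the one point that needs care is the reduction from an arbitrary open $W\subset T\times P\times T\times P$ to products $U\times U'$. The topology on $T\times P$ (in the non-discrete sense used in this section) is that of $[0,1]\times P$, $\N\times P$, or $P\times P$, which is second countable, so $T\times P\times T\times P$ is second countable and every open set is a countable union of basic opens of the form $(U\times U')$ with $U,U'$ open in $T\times P$ — this is exactly the product-base structure one needs, and it is why the factored formula for $\widetilde{\pi}$ above is legitimate after passing to the union. I expect this second-countability reduction, together with checking the set-theoretic identity for $\widetilde{\pi}$ on a product open set, to be the only step meriting explicit mention; the rest follows from the definitions and Proposition \ref{p:propiedadunion}.
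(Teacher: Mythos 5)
Your argument is correct and is essentially the paper's own proof: both decompose an arbitrary open subset of $T\times P\times T\times P$ into a countable union of products of open subsets of $T\times P$ via second countability, apply the identity $\widetilde{\pi}\bigl((B\cap U)\times(B'\cap U')\bigr)=\pi(B\cap U)\times\pi(B'\cap U')$, and conclude using that the factors lie in $\BB_\LB$ and that $\langle\BB_\LB\times\BB_\LB\rangle$ is closed under countable unions. Your write-up is, if anything, slightly cleaner, since it states explicitly that the basic opens are products of opens of $T\times P$ (the paper's phrasing ``open subsets of $T$ and $P$'' is a slip).
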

\begin{proof}
Since $B,\ B'\in \BB^*$, we have $B\times B'\in \BB^{**}$. Observe
that every open set $U\subset T\times P$ is a countable union of
products of open sets. Write $U=\bigcup_{n=1}^{\infty}(U_n\times
V_n)$ with $U_n$ and $V_n$ open subsets of $T$ and $P$,
respectively. Then
\begin{multline*}
\widetilde{\pi}\left((B\times B')\cap
U\right)=\widetilde{\pi}\left((B\times B')\cap
\bigcup_{n=1}^{\infty}(U_n\times
V_n)\right)\\
=\widetilde{\pi}\left(\bigcup_{n=1}^{\infty}\left((B\cap
U_n)\times (B'\cap
V_n)\right)\right)
=\bigcup_{n=1}^{\infty}\widetilde{\pi}\left((B\cap U_n)\times
(B'\cap
V_n)\right)\\
=\bigcup_{n=1}^{\infty}\left(\pi(B\cap U_n)\times \pi(B'\cap
V_n)\right)\,,
\end{multline*}
which is in $\langle\BB_\LB\times\BB_\LB\rangle$.
\end{proof}

\begin{defn}\label{d:extension}
For $B\in \pi(\BB^*,\BB_\LB)$, let
$$\widetilde{\LB}(B)=\int_T\#\left(B\cap (\{t\}\times P)\right)\,d\LB(t)=\int_T\left(\int_{\{t\}\times P}\chi_{B\cap (\{t\}\times P)}\,d\nu)\right)\,d\LB(t)\;,$$
where $\nu$ denotes the counting measure and $\chi_X$ the
characteristic function of a subset $X\subset \{t\}\times P$.
\end{defn}
\begin{rem}
A measure on $T$ induces a transverse invariant measure on
$T\times P$. When $B$ is a generalized transversal,
$\widetilde{\LB}(B)$ is the value of this transverse invariant
measure on $B$. Therefore Definition \ref{d:extension} defines an
extension of this transverse invariant measure to a map
$\widetilde{\LB}: \pi(\BB^*,\BB_\LB)\to[0,\infty]$.
\end{rem}
\begin{prop}\label{p:biendef}
On $B\in \pi(\BB^*,\BB_\LB)$, $\widetilde{\LB}$ is well defined
and satisfies the following properties:
\begin{enumerate}
\item[\rm(a)]$\widetilde{\LB}(\emptyset)=0.$

\item[\rm(b)]$\widetilde{\LB}(\bigcup_{n\in\N}B_n)=\sum_{n=1}^{\infty}\widetilde{\LB}(B_n)$
for every countable family of disjoint sets
$B_n\in\pi(\BB^*,\BB_\LB), n\in\N.$
\end{enumerate}
\end{prop}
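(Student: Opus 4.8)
The plan is to reduce both statements to a single measurability fact: for $B\in\pi(\BB^*,\BB_\LB)$, the \emph{section-counting function}
$$f_B\colon T\to[0,\infty],\qquad f_B(t)=\#\bigl(B\cap(\{t\}\times P)\bigr),$$
is measurable with respect to the completed \sg-algebra $\BB_\LB$, so that $\widetilde{\LB}(B)=\int_T f_B\,d\LB$ is a well-defined element of $[0,\infty]$ (the two integrals in Definition \ref{d:extension} coincide trivially, since $\int_{\{t\}\times P}\chi_{B\cap(\{t\}\times P)}\,d\nu=\#(B\cap(\{t\}\times P))$ for the counting measure $\nu$). Granting this, property (a) is immediate because $f_\emptyset\equiv 0$. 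For (b), observe that for pairwise disjoint $B_n\in\pi(\BB^*,\BB_\LB)$ the sections $B_n\cap(\{t\}\times P)$ are pairwise disjoint for each fixed $t$, so $f_{\bigcup_n B_n}=\sum_n f_{B_n}$ pointwise; since $\bigcup_n B_n\in\pi(\BB^*,\BB_\LB)$ by Proposition \ref{p:propiedadunion}, applying the monotone convergence theorem to the partial sums yields $\widetilde{\LB}(\bigcup_n B_n)=\sum_n\widetilde{\LB}(B_n)$.

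The remaining work, and the step I expect to be the real obstacle, is the $\BB_\LB$-measurability of $f_B$: one cannot simply express the level sets $\{t:f_B(t)\geq k\}$ through projections of $B$, because the local projection of a Borel set need not be Borel, which is exactly the difficulty flagged in the introduction. The plan is to fix a countable basis $\{V_m\}_{m\in\N}$ of the Polish topology of $P$ and prove the key equivalence: $f_B(t)\geq k$ if and only if there is a $k$-tuple $(m_1,\dots,m_k)$ with $V_{m_1},\dots,V_{m_k}$ pairwise disjoint and $B\cap(\{t\}\times V_{m_i})\neq\emptyset$ for every $i$. One direction is clear, since $k$ disjoint basic open sets each meeting the section force at least $k$ points in it; for the converse, given $k$ distinct points of the section one separates them by $k$ pairwise disjoint open subsets of $P$ (using that $P$ is metrizable — e.g.\ via balls of radius one third the minimal pairwise distance) and then shrinks each to a basic open set.

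Finally, I would assemble the level set from this equivalence. The set $\{t:B\cap(\{t\}\times V_{m_i})\neq\emptyset\}$ equals $\pi\bigl(B\cap(T\times V_{m_i})\bigr)$, and $T\times V_{m_i}$ is open in $T\times P$, so this set lies in $\BB_\LB$ by the very definition of $\pi(\BB^*,\BB_\LB)$. Since there are only countably many admissible $k$-tuples, one gets
$$\{t\in T:f_B(t)\geq k\}=\bigcup_{(m_1,\dots,m_k)}\ \bigcap_{i=1}^{k}\pi\bigl(B\cap(T\times V_{m_i})\bigr)\in\BB_\LB$$
for every $k\in\N$, and likewise $\{t:f_B(t)=\infty\}=\bigcap_k\{t:f_B(t)\geq k\}\in\BB_\LB$; hence $f_B$ is $\BB_\LB$-measurable, which closes the proof. (In the \sg-finite case one may additionally note that Remark \ref{r:analytic} identifies $\pi(\BB^*,\BB_\LB)$ with all of $\BB^*$, so $\widetilde{\LB}$ is then defined on every Borel set.)
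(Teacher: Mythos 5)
Your argument is correct, and the key step --- $\BB_\LB$-measurability of the counting function $t\mapsto\#\bigl(B\cap(\{t\}\times P)\bigr)$ --- is handled by a genuinely different and more elementary route than the paper's. You describe the level sets directly: $\{\,t\ |\ \#(B\cap(\{t\}\times P))\geq k\,\}$ is the countable union, over $k$-tuples of pairwise disjoint basic open sets $V_{m_1},\dots,V_{m_k}$ of $P$, of $\bigcap_{i=1}^{k}\pi\bigl(B\cap(T\times V_{m_i})\bigr)$; each strip $T\times V_{m_i}$ is open, so the defining condition of $\pi(\BB^*,\BB_\LB)$ places every such projection in $\BB_\LB$, and the separation of finitely many distinct points of a Hausdorff second countable (metrizable) space by disjoint basic open sets gives the reverse inclusion, so the level sets lie in the \sg-algebra $\BB_\LB$. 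The paper instead proves $h^{-1}(\{n\})\in\BB_\LB$ by induction on $n$, working in $(T\times P)^{n+1}$: it forms the set $D_n\setminus\Delta_n$ of $(n+1)$-tuples of pairwise distinct points of $B$ lying in a common plaque, projects it via Lemma \ref{l:productos} into the product \sg-algebra $\langle\BB_\LB^{n+1}\rangle$, and then verifies $\langle\BB_\LB^{n+1}\rangle|_{\Delta_T}\subset\pi_T^{-1}(\BB_\LB)$. Both arguments detect ``at least $k$ points in a plaque,'' but yours stays inside $T\times P$, uses only the projections $\pi(B\cap(T\times V))$ over vertical strips (formally less than full membership in $\pi(\BB^*,\BB_\LB)$), and dispenses with Lemma \ref{l:productos} and the diagonal/completion bookkeeping, at the mild cost of invoking metrizability (or just the Hausdorff property) of $P$ together with a countable basis --- hypotheses the paper has anyway, also in its final remark where plaques are only Polish. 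Your treatment of (a) and (b) --- the pointwise identity $\chi_{\bigcup_n B_n}=\sum_n\chi_{B_n}$, monotone convergence, and Proposition \ref{p:propiedadunion} to ensure $\bigcup_n B_n\in\pi(\BB^*,\BB_\LB)$ --- coincides with the paper's.
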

\begin{proof}
$\widetilde{\LB}$ is well defined if and only if the function
$h:T\to \R\cup\{\infty\}$, $h(t)=\#(B\cap \left(\{t\}\times
P)\right)$, is measurable with respect to the \sg-algebra
$\BB_\LB$ in $T$; {\em i.e.\/}, if $h^{-1}(\{n\})\in \BB_\LB$ for
all $n\in\N$. To prove this property, we proceed by induction on $n$. It is clear that
$h^{-1}(\{0\})=T\setminus \pi(B)$ belongs to $\BB_\LB$ since
$B\in\pi(\BB^*,\BB_\LB)$. Now, suppose $h^{-1}(\{i\})\in \BB_\LB$
for $i\in\{0,...,n-1\}$ and let us check that $h^{-1}(\{n\})\in
\BB_\LB$. Let $$C_n=\{\,((t,p_1),(t,p_2),...,(t,p_{n+1}))\ |\ t\in
T,\ p_1,...,p_{n+1}\in P\, \}\,,$$ which is a closed in $(T\times
P)^{n+1}$. Observe that $C_n$ is the set of $(n+1)$-uples in
$T\times P$ that lie in the same plaque. We remark that $C_n$ is
homeomorphic to $\Delta_T\times P^{n+1}$, where $\Delta_T$ is the
diagonal of the product $T^{n+1}$, and the projection
$\pi_T:\Delta_T\to T$, $(t,...,t)\mapsto t$ is a homeomorphism.
The measure $\LB$ becomes a measure on $\Delta_T$ via $\pi_T$. The
intersection $B^{n+1}\cap C_n$, denoted by $D_n$, is the set of
$(n+1)$-uples in $B$ that lie in the same plaque. Let
$$\Delta_n=\{\,((t,p_1),(t,p_2),...,(t,p_{n+1}))\in C_n\ |\ \exists\,
i, j\ \text{with}\ i\neq j\, \text{and}\, p_i=p_j\,\}\,,$$ which is closed in
$C_n$. This set consists of the $(n+1)$-uples in each plaque such
that two components are equal. The set $D_n\setminus\Delta_n$
consists of the $(n+1)$-uples of different elements in $B$ that
lie in the same plaque. Therefore,
$\pi_T\circ\pi_\Delta(D_n\setminus\Delta_n)$ consists of the
points $t\in T$ such that the corresponding plaque $\{t\}\times P$
contains more than $n$ points of $B$, where $\pi_\Delta:C_n\to
\Delta_T$ is the natural projection.

Now, let us prove that $\pi_\Delta(D_n\setminus\Delta_n)\in
\pi^{-1}_T(\BB_\LB)=\pi^{-1}_T(\BB)_\LB$. By Lemma
\ref{l:productos}, $\widehat{\pi}\left(B^{n+1}\setminus
\Delta_n\right)\in\langle\BB_\LB^{n+1}\rangle$, where
$\widehat{\pi}:(T\times P)^{n+1}\to T^{n+1}$ is the natural
projection. Therefore
$$\pi_\Delta(D_n\setminus\Delta_n)=\Delta_T\cap\widehat{\pi}\left(B^{n+1}\setminus
\Delta_n\right)\in \langle\BB_\LB^{n+1}\rangle|_{\Delta_T}\,,$$
where $\langle\BB_\LB^{n+1}\rangle|_{\Delta_T}$ denotes the
restriction of the \sg-algebra
$\langle\BB_\LB^{n+1}\rangle$ to $\Delta_T$. We only have to prove that $\langle
B_\LB^{n+1}\rangle|_{\Delta_T}\subset \pi^{-1}_T(\BB_\LB)$. For
that purpose, we have to check that the generators
$\prod_{k=1}^{n+1}F_k$, with $F_k\in\BB_\LB$, satisfy
$(\prod_{k=1}^{n+1}F_k)\cap\Delta_T\in \pi^{-1}_T(\BB_\LB)$. For
each $k$, take $A_k,B_k\in\BB$ with $A_k\subset F_k\subset B_k$
and $\LB(B_k\setminus A_k)=0$. Then
$$\left(\prod_{k=1}^{n+1}A_k\right)\cap\Delta_T\subset \left(\prod_{k=1}^{n+1}F_k\right)\cap\Delta_T\subset \left(\prod_{k=1}^{n+1}B_k\right)\cap\Delta_T\,,$$
and $\left(\prod_{k=1}^{n+1}A_k\right)\cap\Delta_T$ and
$\left(\prod_{k=1}^{n+1}B_k\right)\cap\Delta_T$ belong to
$\pi_T^{-1}(\BB)$ because
\begin{align*}
\left(\prod_{k=1}^{n+1}A_k\right)\cap\Delta_T=\bigcap_{k=1}^{n+1}\pi_T^{-1}(A_k)\;,\qquad
\left(\prod_{k=1}^{n+1}B_k\right)\cap\Delta_T=\bigcap_{k=1}^{n+1}\pi_T^{-1}(B_k)\,.
\end{align*}
Moreover
\begin{multline*}
\LB\left(\left(\left(\prod_{k=1}^{n+1}B_k\right)\cap\Delta_T\right)\setminus\left(\left(\prod_{k=1}^{n+1}A_k\right)\cap\Delta_T\right)\right)\\
=\LB\left(\bigcap_{k=1}^{n+1}(\pi_T^{-1}(B_k)\setminus \pi_T^{-1}(A_k))\right)
= \LB\left(\pi_T^{-1}\left(\bigcap_{k=1}^{n+1}(B_k\setminus A_k)\right)\right)\\
= \LB\left(\bigcap_{k=1}^{n+1}(B_k\setminus A_k)\right)
=0\;.
\end{multline*}
This shows that $\pi_\Delta(D_n\setminus\Delta_n)\in
\pi^{-1}_T(\BB_\LB)=\pi^{-1}_T(\BB)_\LB$. By induction, we have
$$h^{-1}(n)=T\setminus
\left((\pi_T\circ\pi_\Delta(D_n\setminus\Delta_n)\cup
h^{-1}(\{0,...,n-1\})\right)\in \BB_\LB.$$

Property (a) is obvious. To show property (b), observe that
$\chi_{\bigcup B_n}=\sum\chi_{B_n}$, and then use the monotonous
convergence theorem.
\end{proof}

\begin{defn}
If $B\in\BB^*\setminus\pi(\BB^*,\BB_\LB)$, then define
$\widetilde{\LB}(B)=\infty$.
\end{defn}
\begin{prop}\label{p:extension en productos}
$(T\times P,\ \BB^*,\ \widetilde{\LB})$ is a measure space and
$\widetilde{\LB}$ extends \LB.
\end{prop}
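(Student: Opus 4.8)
The plan is to verify the two defining properties of a measure for $\widetilde{\LB}$ on the full Borel \sg-algebra $\BB^*$ — namely $\widetilde{\LB}(\emptyset)=0$ and countable additivity on disjoint families — and then to read off the last assertion from the remark after Definition \ref{d:extension}. The first property is immediate: $\emptyset\in\pi(\BB^*,\BB_\LB)$ and $\widetilde{\LB}(\emptyset)=0$ by Proposition \ref{p:biendef}(a). For countable additivity, let $\{B_n\}_{n\in\N}$ be a disjoint family in $\BB^*$ and set $B=\bigcup_n B_n$. If all $B_n$ belong to $\pi(\BB^*,\BB_\LB)$, then $B\in\pi(\BB^*,\BB_\LB)$ by Proposition \ref{p:propiedadunion} and $\widetilde{\LB}(B)=\sum_n\widetilde{\LB}(B_n)$ is precisely Proposition \ref{p:biendef}(b). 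So everything reduces to the case where some $B_{n_0}\notin\pi(\BB^*,\BB_\LB)$; then $\sum_n\widetilde{\LB}(B_n)=\infty$, the $n_0$-th term already being $\infty$, and it remains to show $\widetilde{\LB}(B)=\infty$.

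The crucial ingredient — and the main obstacle — is the following monotonicity statement: \emph{if $C\in\pi(\BB^*,\BB_\LB)$ and $\widetilde{\LB}(C)<\infty$, then every Borel set $B\subseteq C$ also belongs to $\pi(\BB^*,\BB_\LB)$.} Once this is granted one gets automatically $\widetilde{\LB}(B)\le\widetilde{\LB}(C)$ for such $B\subseteq C$, since $\#(B\cap(\{t\}\times P))\le\#(C\cap(\{t\}\times P))$ for every $t$. It also settles the problematic case above: if $\widetilde{\LB}(B)$ were finite then $B\in\pi(\BB^*,\BB_\LB)$ (otherwise $\widetilde{\LB}(B)=\infty$ by definition of the extension), and applying the statement with this $B$ in the role of $C$ and $B_{n_0}$ in the role of the subset would force $B_{n_0}\in\pi(\BB^*,\BB_\LB)$, contrary to assumption; hence $\widetilde{\LB}(B)=\infty$, as needed. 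This makes $(T\times P,\BB^*,\widetilde{\LB})$ a measure space.

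To prove the monotonicity statement I would pass to the \sg-finite regime of Remark \ref{r:analytic}. Taking $U=T\times P$ in the definition of $\pi(\BB^*,\BB_\LB)$ gives $\pi(C)\in\BB_\LB$, and since $\#(C\cap(\{t\}\times P))\ge 1$ on $\pi(C)$ with finite integral we get $\LB(\pi(C))<\infty$; choose a Borel set $A'\subseteq T$ with $\pi(C)\subseteq A'$ and $\LB(A')<\infty$, so that $B\subseteq C\subseteq A'\times P$. View $A'\times P$ as a product foliated measurable space carrying the finite (hence \sg-finite) Borel measure $\mu=\LB|_{A'}$ on the standard Borel space $A'$. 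By Remark \ref{r:analytic}, every Borel subset of $A'\times P$ lies in the corresponding class $\pi_{A'}(\BB^*(A'\times P),\BB'_\mu)$, where $\BB'$ is the Borel \sg-algebra of $A'$; in particular $B$ does. Hence, for any open $U\subseteq T\times P$, the set $U\cap(A'\times P)$ is open in $A'\times P$ and $\pi(B\cap U)=\pi_{A'}\bigl(B\cap(U\cap(A'\times P))\bigr)\in\BB'_\mu$. Finally, since $A'$ is Borel in $T$ we have $\BB'\subseteq\BB$ and $\mu=\LB|_{\BB'}$, so $\BB'_\mu\subseteq\BB_\LB$; therefore $\pi(B\cap U)\in\BB_\LB$ for every open $U$, i.e. $B\in\pi(\BB^*,\BB_\LB)$, proving the statement.

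It remains to record that $\widetilde{\LB}$ extends \LB\ in the stated sense. Every generalized transversal of $T\times P$ meets each plaque $\{t\}\times P$ — which is a leaf, as $P$ is connected — in a countable, hence \sg-compact, set, so by the remark following Proposition \ref{p:sgcompactos} it belongs to $\pi(\BB^*,\BB_\LB)$, where $\widetilde{\LB}$ is given by the integral of Definition \ref{d:extension}. By the remark following that definition, this integral is exactly the value on generalized transversals of the transverse invariant measure induced on $T\times P$ by the Borel measure \LB\ on $T$, which is the asserted extension property.
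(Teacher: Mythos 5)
Your proof is correct, but the crux of the argument --- showing that $\widetilde{\LB}(\bigcup_n B_n)=\infty$ when some $B_{n_0}\in\BB^*\setminus\pi(\BB^*,\BB_\LB)$ while the union $B$ lies in $\pi(\BB^*,\BB_\LB)$ --- is handled by a genuinely different mechanism than in the paper. You prove a hereditary lemma: if $C\in\pi(\BB^*,\BB_\LB)$ and $\widetilde{\LB}(C)<\infty$, then $\LB(\pi(C))<\infty$, so $C$ lies over a Borel set $A'$ of finite measure, and every Borel $B\subseteq C$ has projections $\pi(B\cap U)$ that are analytic subsets of $A'$, hence \LB-measurable by universal measurability of analytic sets --- exactly the mechanism of Remark \ref{r:analytic}, which the paper itself only invokes later, in the uniqueness Proposition \ref{p:unicidad en productos}. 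Applying this with $C=B$ forces $B_{n_0}\in\pi(\BB^*,\BB_\LB)$, a contradiction, so $\widetilde{\LB}(B)=\infty$. The paper instead shows directly that $\LB(B^\infty)>0$: assuming $B^\infty$ is \LB-null and enclosing it in a Borel null set $A$, it splits $B_1$ into $B_1\cap\pi^{-1}(A)$ (in the class because its projections are squeezed by the null set $A$) and $B_1\setminus\pi^{-1}(A)$ (finite, hence \sg-compact, plaque sections, so in the class by Kallman's Proposition \ref{p:sgcompactos}), contradicting $B_1\notin\pi(\BB^*,\BB_\LB)$ via Proposition \ref{p:propiedadunion}. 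Your route buys a cleaner structural fact --- any Borel set dominated by a saturated set of finite (indeed \sg-finite) \LB-measure automatically belongs to $\pi(\BB^*,\BB_\LB)$, with monotonicity as a byproduct --- and avoids Kallman's theorem in this step, at the price of the analytic-set machinery; the paper's route stays with \sg-compact sections and identifies concretely where the infinite mass comes from, namely $\LB(B^\infty)>0$. Your closing verification that $\widetilde{\LB}$ extends \LB\ (generalized transversals have countable, hence \sg-compact, plaque sections, so they lie in $\pi(\BB^*,\BB_\LB)$ and Definition \ref{d:extension} returns the transverse-measure value) is the same as what the paper delegates to the remark after Definition \ref{d:extension}.
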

\begin{proof}
We only have to prove that
$\widetilde{\LB}(\bigcup_{n}B_n)=\sum_{n}\widetilde{\LB}(B_n)$
for every countable family of disjoint sets $B_n, n\in\N$, in
$\BB^*$. By Proposition \ref{p:biendef}, this holds if
$B_n\in\pi(\BB^*,\BB_\LB)$ for all $n\in\N$. If $\bigcup_n
B_n\in\BB^*\setminus\pi(\BB^*,\BB_\LB)$, then the above equality is obvious. So we
only have to consider the case where some
$B_j\in\BB^*\setminus\pi(\BB^*,\BB_\LB)$ and, however, $\bigcup_n
B_n\in\pi(\BB^*,\BB_\LB)$. We can suppose $B_j=B_1$, and let
$B=\bigcup_n B_n$. Let $$B^{\infty}=\{\,t\in T\ |\ \#(B\cap
(\{t\}\times P))=\infty\,\}\;,$$ which belongs to $\BB_\LB$ by
Proposition \ref{p:biendef}. The proof will be finished by
checking that $\LB(B^{\infty})>0$. We have $B^{\infty}_{1}\subset
B^{\infty}$, where
$$B_1^{\infty}=\{\,t\in T\ |\ \#(B_1\cap (\{t\}\times P))=\infty\,\}\;.$$
Suppose $\LB(B^{\infty})=0$. Since $\BB^{\infty}\in\BB_\LB$, there
is some $A\in \BB$ such that $B^{\infty}\subset A$ and $\LB(A)=0$.
The Borel set $\pi^{-1}(A)$ satisfies
$B_1\cap\pi^{-1}(A)\in\pi(\BB^*,\BB_\LB)$ since
$\emptyset\subset\pi(B_1\cap\pi^{-1}(A)\cap U)\subset A$ and
$\LB(A)=0$ for each open set $U\subset T\times P$. On the other
hand, $B_1\setminus\pi^{-1}(A)$ is a Borel set meeting every
plaque in a finite set, which is \sg-compact, and therefore
projects to a Borel set by Proposition \ref{p:sgcompactos}. Hence
 $B_1\in\pi(\BB^*,\BB_\LB)$ by Proposition \ref{p:propiedadunion},
which is a contradiction.
\end{proof}

We have constructed an extension of each transverse invariant
measure in a product foliated measurable space, but its uniqueness
was not proved. This uniqueness is false in general. For instance, take the
foliated product $\R\times\{\ast\}$ and let $\LB$ be the null
measure on the singleton $\{\ast\}$; our extension $\widetilde{\LB}$ is the zero
measure in the total space. Now, let $\mu$ be the measure defined
by
\begin{enumerate}
\item[(i)]  $\mu(B)=0$ for all countable set $B$; and

\item[(ii)] $\mu(B)=\infty$ for all uncountable Borel set $B$.

\end{enumerate}
This measure $\mu$ extends $\LB$ too and is quite different from
$\widetilde{\LB}$. In order to solve this problem, we require some
conditions to the extension. These conditions have the spirit of
coherency with the concept of transverse invariant measures. We
will prove that our extension is the unique coherent extension.


\begin{defn}
Let $\mu$ be an extension of a transverse invariant measure $\LB$
on $T\times P$. The measure $\mu$ is
called a {\em coherent extension\/} of $\LB$ if satisfies the
following conditions:
\begin{enumerate}
\item[(a)] If $B\in\BB^*$, $B\not\subset\pi^{-1}(S)$ for any $S\in\mathcal{B}$ with $\LB(S)=0$, and $\#B\cap
\{t\}\times P=\infty$ for each plaque $\{t\}\times P$ which meets $B$, then
$\mu(B)=\infty$.

\item[(b)]If $\LB(S)=0$ for some $S\in\mathcal{B}$, then
$\mu(\pi^{-1}(S))=0$.

\item[(c)] If $B\in\BB^*$ and $\LB(S)=\infty$ for all $S\in\mathcal{B}$ with $B\subset\pi^{-1}(S)$, then $\mu(B)=\infty$.
\end{enumerate}
\end{defn}

\begin{rem}
Condition (a) determines $\mu$ on Borel sets with infinite points
in plaques which are not contained in the saturation of a \LB-null
set. Condition (b) means certain coherency between the support of
$\LB$ and the support of the extension $\mu$. Condition (c)
determines $\mu$ on any Borel set so that any Borel set containing
its proyection has infinity \LB-measure.
\end{rem}

\begin{prop}\label{p:unicidad en productos}
$\widetilde{\LB}$ is the unique coherent extension.
\end{prop}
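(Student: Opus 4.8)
The plan is to prove the two halves of the statement separately. That $\widetilde{\LB}$ is itself a coherent extension is a routine verification of conditions (a)--(c) against Definition \ref{d:extension}, using Proposition \ref{p:biendef} and the convention $\widetilde{\LB}=\infty$ off $\pi(\BB^*,\BB_\LB)$ (in the situation of (a) the function $h(t)=\#\bigl(B\cap(\{t\}\times P)\bigr)$ takes only the values $0$ and $\infty$ and, by the other hypothesis, integrates to something positive; (b) and (c) reduce to $\LB(S)=0\Rightarrow\widetilde{\LB}(\pi^{-1}(S))=0$ and to $\LB(\pi(B))=\infty$, respectively); I would dispatch this quickly. The substance is uniqueness, so let $\mu$ be an arbitrary coherent extension, and let us show $\mu(B)=\widetilde{\LB}(B)$ for every $B\in\BB^*$, distinguishing whether or not $B$ lies in $\pi(\BB^*,\BB_\LB)$.

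Suppose first that $B\in\BB^*\setminus\pi(\BB^*,\BB_\LB)$, so $\widetilde{\LB}(B)=\infty$ by definition and we must prove $\mu(B)=\infty$. By definition of $\pi(\BB^*,\BB_\LB)$ there is an open set $U\subset T\times P$ with $\pi(B\cap U)\notin\BB_\LB$. I claim that every Borel set $S\supset\pi(B\cap U)$ has $\LB(S)=\infty$: otherwise $\LB|_S$ would be finite, hence \sg-finite, and since $B\cap U$ is a Borel subset of $\pi^{-1}(S)=S\times P$, the argument of Remark \ref{r:analytic} (with $S$ in place of $T$) would force $\pi(B\cap U)\in\BB_\LB$, a contradiction. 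Since $B\cap U\subset\pi^{-1}(S)$ is equivalent to $S\supset\pi(B\cap U)$, condition (c) applied to $B\cap U$ yields $\mu(B\cap U)=\infty$, and hence $\mu(B)=\infty$. I expect this step --- turning the failure of $\LB$-measurability of a projection into the hypothesis of (c), through the universal measurability of analytic sets --- to be the crux of the proof; the remaining cases are bookkeeping with conditions (a) and (b).

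Now suppose $B\in\pi(\BB^*,\BB_\LB)$. By Proposition \ref{p:biendef} the function $h(t)=\#\bigl(B\cap(\{t\}\times P)\bigr)$ is $\BB_\LB$-measurable, so $B^\infty=h^{-1}(\infty)\in\BB_\LB$. If $\LB(B^\infty)>0$ then $\widetilde{\LB}(B)=\int_T h\,d\LB=\infty$; choosing $A\in\BB$ with $A\subset B^\infty$ and $\LB(A)=\LB(B^\infty)>0$, the Borel set $B\cap\pi^{-1}(A)$ has infinite intersection with every plaque that it meets, while $\pi\bigl(B\cap\pi^{-1}(A)\bigr)=A$ is contained in no $\LB$-null Borel set; so condition (a) gives $\mu\bigl(B\cap\pi^{-1}(A)\bigr)=\infty$, whence $\mu(B)=\infty=\widetilde{\LB}(B)$.

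Finally, suppose $\LB(B^\infty)=0$, and choose $A'\in\BB$ with $B^\infty\subset A'$ and $\LB(A')=0$. Condition (b) gives $\mu(\pi^{-1}(A'))=0$, so $\mu(B)=\mu\bigl(B\setminus\pi^{-1}(A')\bigr)$. Since $B^\infty\subset A'$, the Borel set $B\setminus\pi^{-1}(A')$ meets every plaque in a finite set (it is empty over $A'$ and agrees with $B$ over $T\setminus A'\subset T\setminus B^\infty$), so it belongs to $\pi(\BB^*,\BB_\LB)$ by Proposition \ref{p:sgcompactos}; and $B\cap\pi^{-1}(A')$ belongs to $\pi(\BB^*,\BB_\LB)$ because $\pi\bigl((B\cap\pi^{-1}(A'))\cap W\bigr)\subset A'$ with $\LB(A')=0$ for every open $W$, with $\widetilde{\LB}\bigl(B\cap\pi^{-1}(A')\bigr)=0$. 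Hence Proposition \ref{p:biendef} gives $\widetilde{\LB}(B)=\widetilde{\LB}\bigl(B\setminus\pi^{-1}(A')\bigr)$. But $B\setminus\pi^{-1}(A')$ is Borel with finite --- hence countable --- intersection with every leaf, so it is a generalized transversal; since both $\mu$ and $\widetilde{\LB}$ are extensions of the transverse invariant measure $\LB$ and therefore agree with it on all generalized transversals, we get $\mu\bigl(B\setminus\pi^{-1}(A')\bigr)=\widetilde{\LB}\bigl(B\setminus\pi^{-1}(A')\bigr)$. Combining this with the identities above gives $\mu(B)=\widetilde{\LB}(B)$, which finishes the argument.
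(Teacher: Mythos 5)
Your proof is correct and follows essentially the same route as the paper: the case $B\in\pi(\BB^*,\BB_\LB)$ is handled through $B^\infty$ and its Borel approximations together with conditions (a), (b) and agreement on generalized transversals, while the case $B\in\BB^*\setminus\pi(\BB^*,\BB_\LB)$ uses the analyticity/universal-measurability argument of Remark \ref{r:analytic} to feed condition (c). Your only departures are cosmetic: you apply (c) directly to $B\cap U$ for an open set $U$ witnessing $\pi(B\cap U)\notin\BB_\LB$ instead of the paper's contradiction argument via $B^\infty$, and you also sketch the verification that $\widetilde{\LB}$ itself is coherent, which the paper leaves implicit.
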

\begin{proof}
We prove that every coherent extension has the same values as
$\widetilde{\LB}$ on $\BB^*$. First, we consider the case
$B\in\pi(\BB^*,\BB_\LB)$. Let $$B^\infty=\{\,t\in T\ |\
\#(B\cap(\{t\}\times P))=\infty\,\}\;.$$ This set belongs to
$\BB_\LB$ by Proposition \ref{p:biendef}. Therefore there exist
Borel sets $A,C$ such that $A\subset B^\infty\subset C$ and
$\LB(C\setminus A)=0$. Let $\widetilde{B}^\infty=
B\cap\pi^{-1}(C)$. The Borel set $B\setminus\widetilde{B}^\infty$
is a generalized transversal and hence $\mu(B\setminus \widetilde{B}^{\infty})=\widetilde{\LB}(B\setminus \widetilde{B}^{\infty})$. On the other hand, if
$\LB(\pi(\widetilde{B}^\infty)))=0$, then $\LB(C)=0$ and
$\mu(\widetilde{B}^\infty)\leq\mu(\pi^{-1}(C))=0$ by (b). If
$\LB(B^\infty)) > 0$, let $\widehat{B}^{\infty}=B\cap\pi^{-1}(A)$
and $B'^{\infty}=B\cap\pi^{-1}(C\setminus A)$. Then
$\mu(\widehat{B}^\infty)=\infty$ by (a), and $\mu(B'^{\infty})=0$
by (b). Therefore $\mu$ equals $\widetilde{\LB}$ on
$\pi(\BB^*,\BB_\LB)$.

The case $B\in\BB^*\setminus\pi(\BB^*,\BB_\LB)$ is similar. The
set $B^{\infty}$ is not a Borel set in this case, but observe that
$(B\cap\pi^{-1}(B^\infty))\nsubseteq \pi^{-1}(S)$ with
$\LB(S)<\infty$ or we obtain $\pi(B\cap\pi^{-1}(S)\cap
U)\in\BB_\LB$ for all open set $U\subset T\times P$ by
Remark~\ref{r:analytic}, since $B\cap\pi^{-1}(S)\cap U$ is a Borel
set in $S\times P$ and \LB\ is finite in $S$. Hence
$B\in\pi(\BB^*,\BB_\LB)$ by Propositions~\ref{p:propiedadunion}
and \ref{p:sgcompactos}. Therefore $\mu(B)=\infty$ by (c). This
proves that $\mu$ and $\widetilde{\LB}$ agree on $\BB^*$, as
desired.
\end{proof}


\section{The general case}

In this section, we prove the following theorem.

\begin{thm}\label{t:extension final}
Let $X$ be a foliated measurable space with a transverse invariant
measure \LB. There exists a measure $\widetilde{\LB}$ on $X$ that
extends $\LB$.
\end{thm}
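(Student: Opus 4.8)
The plan is to globalize the product‑case construction of Section~\ref{s:producto} by working with a fixed countable foliated measurable atlas $\{\varphi_i:U_i\to T_i\times P\}_{i\in\N}$ and patching the locally defined extensions together, being careful that the ``counting measure on the leaves'' is not counted more than once. First I would fix the atlas and, for each $i$, transport the transverse invariant measure $\LB$ to a Borel measure $\LB_i$ on $T_i$ (using that a transverse invariant measure restricts to an honest measure on the transversal $\varphi_i^{-1}(T_i\times\{\ast\})\in\TT(X)$), so that Proposition~\ref{p:extension en productos} supplies a measure $\widetilde{\LB}_i$ on $U_i\cong T_i\times P$ extending $\LB_i$. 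The point of holonomy invariance is that on overlaps $U_i\cap U_j$ the change of charts is built from measurable holonomy transformations, so $\widetilde{\LB}_i$ and $\widetilde{\LB}_j$ agree on $U_i\cap U_j$; this is the compatibility that makes the patching well defined.

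Next I would convert the open cover $\{U_i\}$ into a countable Borel partition $\{W_i\}$ of $X$ with $W_i\subset U_i$, e.g. $W_i=U_i\setminus\bigcup_{j<i}U_j$, and define, for a Borel set $B\subset X$,
\begin{equation*}
\widetilde{\LB}(B)=\sum_{i\in\N}\widetilde{\LB}_i(B\cap W_i)\;,
\end{equation*}
where $\widetilde{\LB}_i(B\cap W_i)$ makes sense because $B\cap W_i$ is a Borel subset of $U_i$ and is assigned the value from the product case (with the convention $\widetilde{\LB}_i(\,\cdot\,)=\infty$ outside $\pi_i(\BB_i^*,(\BB_i)_{\LB_i})$). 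That $\widetilde{\LB}$ is a measure on the Borel \sg‑algebra of $X$ is then formal: $\widetilde{\LB}(\emptyset)=0$, and \sg‑additivity follows from \sg‑additivity of each $\widetilde{\LB}_i$ (Proposition~\ref{p:biendef}(b), Proposition~\ref{p:extension en productos}) together with the Tonelli‑type interchange of the two sums over $\N$. Finally, to see that $\widetilde{\LB}$ extends $\LB$ on a generalized transversal $T\in\TT(X)$: each $T\cap W_i$ meets every plaque of $U_i$ in a countable set, so it lies in $\pi_i(\BB_i^*,(\BB_i)_{\LB_i})$ and $\widetilde{\LB}_i(T\cap W_i)$ equals the transverse‑measure value of the generalized transversal $T\cap W_i$; summing over $i$ and using \sg‑additivity of the transverse invariant measure $\LB$ on the disjoint decomposition $T=\bigsqcup_i (T\cap W_i)$ gives $\widetilde{\LB}(T)=\LB(T)$.

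**The main obstacle** is the compatibility of the local extensions on overlaps, i.e. proving $\widetilde{\LB}_i|_{U_i\cap U_j}=\widetilde{\LB}_j|_{U_i\cap U_j}$. One has to check that a Borel set $B\subset U_i\cap U_j$ lies in the ``good'' class $\pi_i(\BB_i^*,(\BB_i)_{\LB_i})$ if and only if it lies in $\pi_j(\BB_j^*,(\BB_j)_{\LB_j})$, and that the two integrals in Definition~\ref{d:extension} coincide; both reduce, via the change‑of‑charts map, to the fact that the holonomy transformation relating the transversals $T_i$ and $T_j$ along the plaques in $U_i\cap U_j$ is measurable and carries $\LB_i$ to $\LB_j$, while preserving the counting measure along plaques. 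Since the partition $\{W_i\}$ is chosen once and for all, one actually only needs the weaker statement that the value $\widetilde{\LB}_i(B\cap W_i)$ does not depend on the chart $U_i\supset W_i$ used to compute it — but it is cleanest to establish full overlap compatibility first. I would also remark, paralleling the product case, that without a coherency condition the extension need not be unique; a global version of the coherency conditions and the corresponding uniqueness statement can be phrased chart‑by‑chart, but I would defer a detailed discussion of uniqueness in the general case.
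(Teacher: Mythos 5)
Your plan reproduces the paper's own architecture: the local extensions $\widetilde{\LB}_i$ from Proposition~\ref{p:extension en productos}, the disjointification $W_i=U_i\setminus\bigcup_{j<i}U_j$ (your $B\cap W_i$ is exactly the paper's $B_i$ in Definition~\ref{d:medida final}), the sum over charts, and the verification on generalized transversals by \sg-additivity of \LB. The gap is precisely at the step you yourself single out as the main obstacle. The claim that the overlap compatibility ``reduces, via the change-of-charts map, to the fact that the holonomy transformation relating $T_i$ and $T_j$ carries $\LB_i$ to $\LB_j$ while preserving the counting measure along plaques'' is a slogan, not an argument, and it conceals the two places where the real work lies. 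First, there is no single holonomy transformation relating $T_i$ and $T_j$: even to produce a countable family of partial measurable holonomy transformations covering $\pi_i(U_i\cap U_j)$ one must decompose the chart change, which requires the Kunugui--Novikov theorem (Theorem~\ref{t:Srivastava}) to write the overlap as a countable union of Borel rectangles on which $\varphi_j\circ\varphi_i^{-1}(t,x)=(f(t),g(t,x))$, together with a separate argument (connectedness of the basic open sets, Proposition~\ref{p:sgcompactos}, and the fact that injective Borel maps between standard Borel spaces are isomorphisms onto Borel images) showing that the transverse components $f$ are Borel isomorphisms; this is Lemma~\ref{l:cociclo} and is not automatic from the definition of a foliated measurable space.

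Second, and more seriously, the two quantities to be compared are $\int_{T_i}\#\bigl(B\cap(\{t\}\times\R^n)\bigr)\,d\LB_i$ and the analogous integral over $T_j$, i.e.\ integrals taken with respect to two different plaque fiberings: a single plaque of $U_i$ can meet countably many plaques of $U_j$ and conversely, so ``holonomy preserves the counting measure along plaques'' does not transport one integral into the other. On generalized transversals both sides equal $\LB(B)$ and there is nothing to prove; the delicate case --- the whole point of the theorem --- is a Borel set whose intersections with the $U_i$-plaques are infinite over a set of positive $\LB_i$-measure while its intersections with every $U_j$-plaque are finite, or a set whose projection $\pi_i(B)$ is only \LB-measurable rather than Borel, so that membership in $\pi_i(\BB^*,\BB_{\LB_i})$ versus $\pi_j(\BB^*,\BB_{\LB_j})$ must itself be compared (Lemma~\ref{l:coincidencia en proyectores}). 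The paper resolves this in Lemma~\ref{l:independencia de cartas} by stratifying $B$ according to the cardinality of its plaque fibers in both charts, observing that the mixed infinite/finite stratum is still a generalized transversal of infinite transverse measure, and passing to Borel hulls and kernels to handle the completion; some argument of this type is indispensable, and without it neither the full compatibility $\widetilde{\LB}_i=\widetilde{\LB}_j$ on $U_i\cap U_j$ nor the weaker chart-independence of $\widetilde{\LB}_i(B\cap W_i)$ that you propose to fall back on is established. Note also that even the bare extension property uses, as the paper does, that $\widetilde{\LB}_i$ agrees with \LB\ on generalized transversals of $U_i$; this rests on Proposition~\ref{p:sgcompactos} through a countable-section decomposition and holonomy invariance, not on a formal identity, so it too deserves at least a sentence in a complete write-up.
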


Let $\{U_i,\varphi_i\}_{i\in\N}$ be a foliated measurable atlas
with $\varphi_i(U_i)=T_i\times\R^n$, where $T_i$ is a standard
Borel space. It is clear that $\varphi_i^{-1}( T_i\times\{\ast\})$
is a generalized transversal and, via $\varphi_i$, we obtain a
Borel measure $\LB_i$ on $T_i$. Proposition~\ref{p:extension en
productos} provides a measure $\widetilde{\LB}_i$ on
$U_i\thickapprox T_i\times\R^n$ that extends $\LB_i$. Moreover
Proposition \ref{p:sgcompactos} gives
$\widetilde{\LB}_i(T)=\LB(T)$ for all generalized transversal
$T\subset U_i$. Let $\pi_i:U_i\to
\varphi_i^{-1}(T_i\times\{\ast\})$ denote the natural projections.

We begin with a description of the change of foliated measurable
charts.

\begin{thm}[Kunugui, Novikov \cite{Srivastava}]\label{t:Srivastava}
Let $\{V_n\}_{n\in\N}$ be a countable base of open sets for a
Polish space $P$. Let $B\subset T\times P$ be a Borel set such
that $B\cap(\{t\}\times P)$ is an open set for all $t\in T$. Then
there exists a sequence $\{B_n\}_{n\in\N}$ of Borel subsets of $T$
such that $$B=\bigcup_n (B_n\times V_n).$$
\end{thm}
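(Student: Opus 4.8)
The plan is to produce the sets $B_n$ by the only reasonable candidate and then to reduce the whole statement to a single measurability question. Set
$$B_n=\{\,t\in T\ |\ V_n\subseteq B_t\,\},$$
where $B_t=\{\,p\in P\ |\ (t,p)\in B\,\}$ denotes the section of $B$ over $t$. First I would check the set equality $B=\bigcup_n(B_n\times V_n)$ by double inclusion. The inclusion $\bigcup_n(B_n\times V_n)\subseteq B$ is immediate: if $t\in B_n$ and $p\in V_n$, then $\{t\}\times V_n\subseteq B$ gives $(t,p)\in B$. For the reverse inclusion, take $(t,p)\in B$; then $p\in B_t$, and $B_t$ is open by hypothesis, so since $\{V_n\}$ is a base there is some $n$ with $p\in V_n\subseteq B_t$, whence $t\in B_n$ and $(t,p)\in B_n\times V_n$. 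Thus the theorem reduces entirely to showing that each $B_n$ is a Borel subset of $T$.

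To attack the Borelness of $B_n$, I would rewrite it through its complement as a projection:
$$T\setminus B_n=\{\,t\in T\ |\ \exists\,p\in V_n,\ (t,p)\notin B\,\}=\pi_T\big((T\times V_n)\cap B^c\big),$$
where $B^c$ is Borel with closed sections and $\pi_T$ is the first projection. This identity already exhibits $B_n$ as a co-projection of a Borel set, hence as a coanalytic set. The difficulty is exactly the phenomenon flagged in the introduction: the projection of a Borel set need not be Borel, so this formula does not by itself give that $B_n$ is Borel.

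The hard part will be to upgrade $B_n$ from coanalytic to Borel, and here I would separate two cases. When $P$ is locally compact and second countable --- which is the only case needed for the application, where $P=\R^n$ --- the section $V_n\cap B_t^c$ of $(T\times V_n)\cap B^c$ is relatively closed in the $\sg$-compact open set $V_n$, hence itself $\sg$-compact; Proposition~\ref{p:sgcompactos} then applies and yields that $\pi_T\big((T\times V_n)\cap B^c\big)$ is Borel, so $B_n$ is Borel and we are done. For a general Polish $P$ this $\sg$-compactness fails, and the genuine obstacle appears: one must show that the coanalytic set $B_n$ is in fact analytic and then invoke Souslin's theorem ($\Sigma^1_1\cap\Pi^1_1$ equals Borel). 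I would establish this analyticity by transfinite induction on the Borel rank of $B$, treating the open sets as the base case and handling the successor and limit stages through the reduction and separation properties of the Borel classes. This inductive descriptive-set-theoretic step is the crux, and it is what the cited theorem of Kunugui and Novikov supplies.
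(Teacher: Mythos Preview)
The paper does not give a proof of this theorem: it is quoted, with attribution to Kunugui and Novikov, from Srivastava's book and used as a black box. There is therefore no ``paper's own proof'' to compare against.

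Your reduction is correct and standard: defining $B_n=\{\,t\in T\mid V_n\subseteq B_t\,\}$ and checking $B=\bigcup_n(B_n\times V_n)$ by double inclusion is exactly the natural setup, and the identification $T\setminus B_n=\pi_T\bigl((T\times V_n)\cap B^c\bigr)$ correctly isolates the remaining issue as a Borelness question for a projection. Your locally compact argument is sound and self-contained: when $P$ is locally compact (in particular $P=\R^n$, the only case the paper actually uses), the sections $V_n\cap B_t^c$ are $\sigma$-compact and Proposition~\ref{p:sgcompactos} finishes the job. This is a clean observation and, for the purposes of the paper, already covers everything needed.

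For general Polish $P$, however, your proposal is not a proof but a pointer back to the very theorem you are trying to prove: you sketch ``transfinite induction on the Borel rank of $B$'' and then say that this inductive step ``is what the cited theorem of Kunugui and Novikov supplies.'' That is circular. The actual inductive argument (showing, roughly, that the class of Borel $B$ for which all the $B_n$ are Borel is closed under complements and countable unions, which requires the Novikov separation/reduction machinery at the $\Pi^0_\alpha$ stage) is genuinely nontrivial and is precisely the content of the cited result; you have not supplied it. So: full credit for the locally compact case, but for general Polish $P$ you have only restated the difficulty, not resolved it.
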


We take a countable base $\{V_m\}_{m\in\N}$ of $\R^n$ by connected
open sets.

\begin{lemma}\label{l:cociclo}
For $i,j\in\N$, there exists a sequence of Borel subsets of $T_i$,
$\{B_m\}_{m\in\N}$, and a sequence of open sets $\{W_m\}_{m\in\N}$ such that $\varphi_i(U_i\cap
U_j)=\bigcup_m(B_m\times W_m)$ and
$\varphi_j\circ\varphi_i^{-1}(t,x)= (f_{ijm}(t), g_{ijm}(t,x))$
for $(t, x)\in B_m\times W_m$, where each $f_{ijm}$ is a Borel
isomorphism.
\end{lemma}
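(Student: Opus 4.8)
The plan is to analyze the change of charts $\varphi_j\circ\varphi_i^{-1}$ restricted to the overlap $\varphi_i(U_i\cap U_j)\subset T_i\times\R^n$. First I would note that $\varphi_i(U_i\cap U_j)$ is an open subset of $T_i\times\R^n$ in the MT-topology (product of discrete and Euclidean), hence in particular a Borel set whose slices $\big(\varphi_i(U_i\cap U_j)\big)\cap(\{t\}\times\R^n)$ are open in $\R^n$ for every $t\in T_i$. Applying Theorem \ref{t:Srivastava} with $P=\R^n$ and the chosen countable base $\{V_m\}$ by connected open sets gives a sequence of Borel subsets $\{B_m^0\}\subset T_i$ with $\varphi_i(U_i\cap U_j)=\bigcup_m(B_m^0\times V_m)$. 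I would reindex this countable union as $\bigcup_m(B_m\times W_m)$ with $W_m$ connected open (allowing repetitions among the $W_m$ and shrinking the $B_m$ as needed) so that the rectangles are honest product pieces of the overlap.

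The second step is to upgrade the $B_m$ so that the change of charts is \emph{constant on transversals} over each rectangle, i.e.\ has the displayed form $(t,x)\mapsto(f_{ijm}(t),g_{ijm}(t,x))$. The key observation is that $\varphi_j\circ\varphi_i^{-1}$ is an MT-isomorphism onto its image inside $T_j\times\R^n$, so in particular it is continuous for the MT-topologies. Fix a rectangle $B_m\times W_m$ with $W_m$ connected. For each $t\in B_m$, the plaque $\{t\}\times W_m$ is connected in the leaf topology, hence its image under $\varphi_j\circ\varphi_i^{-1}$ lies in a single plaque $\{s\}\times\R^n$ of the $j$-th chart; call that value $s=f_{ijm}(t)$. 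This defines the first component, which depends only on $t$; continuity/measurability of $\varphi_j\circ\varphi_i^{-1}$ together with measurability of the first-factor projections gives that $f_{ijm}$ is Borel, and applying the same argument to $\varphi_i\circ\varphi_j^{-1}$ shows $f_{ijm}$ is a Borel isomorphism onto its image (after possibly further subdividing the $B_m$ so that the images are disjoint or the maps are bijective on each piece). The second component $g_{ijm}(t,x)$ is then simply the $\R^n$-coordinate of $\varphi_j\circ\varphi_i^{-1}(t,x)$, which is continuous in $(t,x)$ for the non-discrete product topology and restricts to a diffeomorphism (or at least homeomorphism) on each plaque.

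I would expect the main obstacle to be the bookkeeping needed to guarantee that each $f_{ijm}$ is a genuine Borel \emph{isomorphism} rather than just a Borel map, and that the decomposition into rectangles is compatible with this. A priori, on a given $B_m$ the induced transversal map could fail to be injective (two points $t,t'\in T_i$ could map to the same transversal point in $T_j$, if their plaques overlap after the change of charts), or the images could overlap. The fix is to use that $\varphi_j\circ\varphi_i^{-1}$ is an MT-isomorphism, so it \emph{is} a bijection onto its open image; by pulling back a countable generating family of rectangles from the $T_j\times\R^n$ side and intersecting with the $B_m\times W_m$ from the $T_i$ side, and using the fact (implicit in the setup) that a countable Borel-to-Borel bijection with Borel inverse between standard Borel spaces exists on each piece, one arranges that each restricted $f_{ijm}$ is injective with Borel inverse. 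Since countable refinements of a countable family of rectangles are again countable, and Borel subsets restrict to Borel subsets, this refinement stays within the allowed form, and the lemma follows.
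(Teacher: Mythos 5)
Your first step (a single application of Theorem \ref{t:Srivastava} to $\varphi_i(U_i\cap U_j)$, then connectedness of the $V_m$ to see that each $\{t\}\times V_m$ maps into one plaque of the $j$-chart, so the change of charts is triangular on each rectangle) matches the start of the paper's argument, and you correctly single out injectivity of $f_{ijm}$ as the real difficulty: one plaque of $U_j$ may meet many plaques of $U_i$, so on rectangles produced only from the $i$-side the transversal component need not be injective. The gap is that your proposed fix never actually proves injectivity. Saying that one ``pulls back a countable generating family of rectangles from the $T_j\times\R^n$ side and intersects with $B_m\times W_m$'' leaves two things unestablished. First, the $\varphi_i\circ\varphi_j^{-1}$-image of a rectangle is in general not a rectangle, so its intersection with $B_m\times W_m$ is not of the form $B\times W$; to stay ``within the allowed form'' one must apply Theorem \ref{t:Srivastava} a \emph{second} time, to each set $\varphi_i\circ\varphi_j^{-1}(B'_k\times V_k)$, where $\varphi_j(U_i\cap U_j)=\bigcup_k(B'_k\times V_k)$ is a Kunugui--Novikov decomposition on the $j$-side --- this is exactly what the paper does, and your remark that ``countable refinements of rectangles are again rectangles'' does not cover it. Second, and more importantly, injectivity does not follow merely from $\varphi_j\circ\varphi_i^{-1}$ being a bijection, nor from ``a countable Borel-to-Borel bijection existing on each piece'': the actual argument is that each refined piece $B'_{k,m}\times V_m$ maps into a fixed rectangle $B'_k\times V_k$ whose fibre $V_k$ is \emph{connected}, so the $\varphi_i\circ\varphi_j^{-1}$-image of a single plaque $\{t''\}\times V_k$ is connected in the MT-topology and hence lies in one plaque $\{s\}\times\R^n$ of the $i$-chart; if $f(t)=f(t')=t''$ with $t\neq t'$, both $\{t\}\times V_m$ and $\{t'\}\times V_m$ would lie in that single plaque, a contradiction. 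This connectedness argument, which forces the first decomposition to be carried out on the $j$-side, is the missing idea. Your fallback ``after possibly further subdividing the $B_m$ so that the maps are bijective on each piece'' also presupposes that a countable subdivision into injectivity pieces exists, which itself requires proof (e.g.\ checking the map is countable-to-one and invoking a Lusin--Novikov type theorem), none of which appears in the proposal.

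A minor further point: once injectivity is established, the Borel-isomorphism conclusion does not come from ``applying the same argument to $\varphi_i\circ\varphi_j^{-1}$'' (the pieces of a symmetric decomposition need not match up); the paper obtains it from Proposition \ref{p:sgcompactos}, which makes the image a Borel (hence standard) set, together with the fact that an injective Borel map between standard Borel spaces is a Borel isomorphism onto its image.
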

\begin{proof}
We apply Theorem \ref{t:Srivastava} to $\varphi_j(U_i\cap U_j)$
and obtain a family $\{B'_m\}_{m\in\N}$ such that
$\varphi_j(U_i\cap U_j)=\bigcup_m(B'_m\times V_m)$. Now we apply
Theorem \ref{t:Srivastava} to each set
$\varphi_i\circ\varphi_j^{-1}(B'_k\times V_k)$, $k\in\N$. We
obtain sequences $B'_{k,n}$ such that
$\varphi_i\circ\varphi_j^{-1}(B'_k\times V_k)=
\bigcup_m(B'_{k,m}\times V_m)$, $k\in\N$. The set
$\varphi_j\circ\varphi_i^{-1}(\{t\}\times V_m)$, $t\in B'_{k,m}$,
is contained in only one plaque $\{\ast\}\times\R^n$ since each
$V_m$ is connected. Therefore, $\varphi_j\circ\varphi_i^{-1}(t,x)=
(f_{ijkm}(t), g_{ijkm}(t,x))$ for $(t,x)\in B'_{k,m}\times V_m$.
We must show that $f_{ijkm}$ is one-to-one. If there exist
$t,t'\in T_i$ with $f_{ijkm}(t)=f_{ijkm}(t')=t''$, then
$g_{ijkm}(\{t\}\times V_m)$ and $g_{ijkm}(\{t'\}\times V_m)$ are
connected open subsets of the plaque $\{t''\}\times V_k$ in
$B'_{k}\times V_k$, but this plaque is the image by
$\varphi_j\circ\varphi_i^{-1}$ of a connected open set since
$\varphi_j$ and $\varphi_i$ are homeomorphisms. Hence this set is
contained in a single plaque of $U_i$. This is a contradiction
with the assumption $t\neq t'$.

The functions $f_{ijkm}$ are, obviously, measurable and they have
measurable image by Proposition \ref{p:sgcompactos}, hence the image is a
standard Borel space. Since $f_{ijkn}$ is a one-to-one measurable
function between standard Borel spaces, it is a Borel isomorphism
\cite{Srivastava}. The proof is completed by observing that the required sequence $B_m\times W_m$ is the bisequence $B'_{k,m}\times V_m$ $k,m\in\N$.
\end{proof}

\begin{lemma}\label{l:coincidencia en proyectores}
Let $B$ be a Borel subset of $U_i\cap U_j$, $i,j\in \N$. Then
$$ B\in\pi_i(\BB^*,\BB_{\LB_i})\Longleftrightarrow
B\in\pi_j(\BB^*,\BB_{\LB_j})\;.$$
\end{lemma}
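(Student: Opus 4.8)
The plan is to show that the two projection conditions are genuinely the same by translating everything through the change of chart described in Lemma \ref{l:cociclo}. By symmetry in $i$ and $j$, it suffices to prove one implication, say that $B\in\pi_i(\BB^*,\BB_{\LB_i})$ implies $B\in\pi_j(\BB^*,\BB_{\LB_j})$. Write $\varphi_i(U_i\cap U_j)=\bigcup_m (B_m\times W_m)$ as in Lemma \ref{l:cociclo}, with $\varphi_j\circ\varphi_i^{-1}(t,x)=(f_{ijm}(t),g_{ijm}(t,x))$ on $B_m\times W_m$, each $f_{ijm}$ a Borel isomorphism onto its image. Since $\pi_j(\BB^*,\BB_{\LB_j})$ is closed under countable unions by Proposition \ref{p:propiedadunion}, and any Borel $B\subset U_i\cap U_j$ is the countable union of the pieces $B\cap \varphi_i^{-1}(B_m\times W_m)$, I would reduce at once to the case where $B\subset \varphi_i^{-1}(B_m\times W_m)$ for a single $m$; here the change of chart is a genuine Borel isomorphism of the form (Borel iso on $T$) $\times$ (fiberwise homeomorphism).

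The core of the argument is then to check that such a ``fibered'' isomorphism preserves the defining condition of the projection class. Fix $m$ and set $\Phi=\varphi_j\circ\varphi_i^{-1}$ restricted to $B_m\times W_m$. First I would verify that $\Phi$ carries $\LB_i$ to $\LB_j$ on the relevant base sets: because $f_{ijm}$ is the holonomy transformation read in coordinates and $\LB$ is holonomy invariant, $\LB_j(f_{ijm}(S))=\LB_i(S)$ for Borel $S\subset B_m$, and consequently $f_{ijm}$ induces an isomorphism of completed $\sigma$-algebras $\BB_{\LB_i}\to\BB_{\LB_j}$ on these pieces. Next, the key point: if $U\subset T_j\times\R^n$ is open, then $\Phi^{-1}(U)\cap(B_m\times W_m)$ is open in $B_m\times W_m$ (because $\Phi$ is fiberwise continuous and $f_{ijm}$ is the identity on the discrete factor up to relabeling), so it is again a countable union of products $B'_k\times W'_k$ of Borel-in-$T$ times open-in-$\R^n$ sets by Theorem \ref{t:Srivastava}. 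For each such product, $\pi_j\big(\Phi(B)\cap U\big)=f_{ijm}\big(\bigcup_k \pi_i(B\cap (B'_k\times W'_k))\big)$, and each $\pi_i(B\cap(B'_k\times W'_k))$ lies in $\BB_{\LB_i}$ precisely because $B\in\pi_i(\BB^*,\BB_{\LB_i})$ and $B'_k\times W'_k$ meets $B$ inside an open set. Applying the completed-$\sigma$-algebra isomorphism $f_{ijm}$ and a countable union finishes this piece.

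I expect the main obstacle to be the bookkeeping around the fiberwise homeomorphisms $g_{ijm}$: one must be sure that pulling back an arbitrary open $U\subset T_j\times\R^n$ through $\Phi$ really does land in an open subset of $B_m\times W_m$ to which Theorem \ref{t:Srivastava} applies, and that the decomposition interacts correctly with the projections so that images of projections equal projections of images. Once that commutation $\pi_j\circ\Phi = f_{ijm}\circ\pi_i$ (on the appropriate domain, up to the open decompositions) is established, the measurability transfer is automatic from the holonomy invariance of $\LB$ and the definition of the completion. Assembling the countably many indices $m$ via Proposition \ref{p:propiedadunion} then yields $\Phi(B)=\varphi_j(\varphi_i^{-1}(B))\in\pi_j(\BB^*,\BB_{\LB_j})$, i.e. $B\in\pi_j(\BB^*,\BB_{\LB_j})$ as a subset of $U_i\cap U_j$, which is exactly the desired equivalence.
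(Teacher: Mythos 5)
Your argument is correct and follows essentially the same route as the paper's (much terser) proof: decompose $U_i\cap U_j$ via Lemma \ref{l:cociclo}, use that the maps $f_{ijm}$ are measurable holonomy transformations so that \LB-invariance transfers Borel sets and null sets between $T_i$ and $T_j$, and reassemble with Proposition \ref{p:propiedadunion}. Two small touch-ups: your set $\Phi^{-1}(U)$ is not open in $B_m\times W_m$ (the chart change is not continuous for the Polish product topologies, only leafwise), but its plaque slices are open, which is exactly the hypothesis needed to apply Theorem \ref{t:Srivastava}; and since the $B'_k$ are merely Borel, justify $\pi_i\bigl(B\cap(B'_k\times W'_k)\bigr)=B'_k\cap\pi_i\bigl(B\cap(T_i\times W'_k)\bigr)$ so that the defining condition of $\pi_i(\BB^*,\BB_{\LB_i})$, which quantifies over open sets only, can be invoked.
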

\begin{proof}
By Lemma \ref{l:cociclo}, there exists a countable family of
measurable holonomy transformations from
$\varphi_i^{-1}(T_i\times\{\ast\})$ to
$\varphi_j^{-1}(T_j\times\{\ast\})$ whose domains and ranges cover
$\pi_i(U_i\cap U_j)$ and $\pi_j(U_i\cap U_j)$, respectively.
Therefore, if $A$ is a Borel set contained in $U_i\cap U_j$ and
$\pi_i(A)$ is a Borel set, then $\pi_j(A)$ is a Borel set and
$$\LB(\pi_i(A))=0\Longleftrightarrow\LB(\pi_j(A))=0\;.$$
\end{proof}

\begin{lemma}\label{l:independencia de cartas}
$\widetilde{\LB}_i(B)=\widetilde{\LB}_j(B)$ for all Borel set
$B\subset U_i\cap U_j$, $i,j\in \N$.
\end{lemma}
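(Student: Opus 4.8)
The plan is to show that the two extensions $\widetilde{\LB}_i$ and $\widetilde{\LB}_j$ agree on Borel subsets of $U_i\cap U_j$ by reducing to the definition via integration of the plaque-counting function. By Lemma~\ref{l:coincidencia en proyectores}, the domain $\pi_i(\BB^*,\BB_{\LB_i})$ and $\pi_j(\BB^*,\BB_{\LB_j})$ agree on $U_i\cap U_j$, so it suffices to treat two cases: $B$ belongs to this common domain, or $B$ lies outside it (in which case both extensions assign $\infty$ by definition, and we are done).

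For the main case, suppose $B\subset U_i\cap U_j$ lies in $\pi_i(\BB^*,\BB_{\LB_i})=\pi_j(\BB^*,\BB_{\LB_j})$. The key point is that the plaques of $U_i$ and the plaques of $U_j$ do \emph{not} coincide on the overlap: a plaque $\{t\}\times\R^n$ of the $i$-th chart is decomposed into at most countably many pieces, each contained in a single plaque of the $j$-th chart, and conversely. This is precisely the content of Lemma~\ref{l:cociclo}: writing $\varphi_i(U_i\cap U_j)=\bigcup_m(B_m\times W_m)$ with the $W_m$ connected, each slice $\varphi_i^{-1}(\{t\}\times W_m)$ sits inside one plaque of $U_j$, and the maps $f_{ijm}$ describing the transverse coordinate change are Borel isomorphisms. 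So the plaque-counting function for $U_j$ can be recovered from that for $U_i$ by a countable bookkeeping argument over the pieces $B_m\times W_m$, together with the holonomy transformations $f_{ijm}$, which preserve the transverse measure $\LB$. Concretely, I would first verify the identity for generalized transversals contained in $U_i\cap U_j$ --- here both $\widetilde{\LB}_i$ and $\widetilde{\LB}_j$ reduce to the value of the transverse invariant measure $\LB$, which is chart-independent by the invariance under measurable holonomy transformations, and this is already noted in the paragraph preceding Lemma~\ref{l:cociclo}. Then I would bootstrap to general $B$ in the common domain: decompose $B$ using the $B_m\times W_m$ into countably many pieces on which the two plaque structures are ``compatible'', apply countable additivity (Proposition~\ref{p:biendef}(b)), and on each piece compare the two integrals $\int \#(B\cap\{t\}\times\R^n)\,d\LB_i$ versus the $j$-analogue by pushing forward along the Borel isomorphisms $f_{ijm}$ and using Fubini/monotone convergence to interchange the countable sum over $m$ with the integral.

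The main obstacle will be the combinatorial/measure-theoretic bookkeeping needed to pass from the countable ``local'' compatibility of plaques to the equality of the two global counting integrals. One must be careful that the decomposition of $B$ into pieces adapted to both plaque structures simultaneously can be arranged to be countable and disjoint up to $\LB$-null sets, and that no point of $B$ is counted twice or dropped when reassembling; the fact that the $W_m$ (hence the $j$-plaque slices they determine) may overlap forces a disjointification step, and one must check this disjointification stays inside the common domain $\pi_i(\BB^*,\BB_{\LB_i})$ --- which follows from Propositions~\ref{p:propiedadunion} and \ref{p:sgcompactos} together with Lemma~\ref{l:coincidencia en proyectores}. Once the pieces are in place, the equality on each piece is essentially a change-of-variables computation: $f_{ijm}$ is a Borel isomorphism carrying $\LB_i$ to $\LB_j$ on the relevant set (this is the measurable-holonomy invariance of $\LB$), so the integrals match termwise, and summing over $m$ finishes the proof.
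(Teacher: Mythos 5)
Your strategy is sound and it does prove the lemma, but it is a genuinely different route from the paper's. The paper never performs a change-of-variables computation with the cocycle maps $f_{ijm}$: after reducing (via Lemma~\ref{l:coincidencia en proyectores}) to sets in the common domain and to the case where $\pi_i(B)$ is Borel, it stratifies $B$ by the cardinality of its intersections with the $i$-plaques (the sets $\widetilde{B}^k$) and then, inside the stratum with infinite $i$-plaque intersections, by the cardinality of intersections with the $j$-plaques (the sets $\widetilde{B}^\infty_l$). The finite strata are generalized transversals, on which both $\widetilde{\LB}_i$ and $\widetilde{\LB}_j$ equal $\LB$ (this is the same observation you make at the start of your bootstrap), and the doubly infinite stratum is handled by the dichotomy: its $\pi_i$-projection is \LB-null, in which case both extensions vanish on it by Lemma~\ref{l:coincidencia en proyectores}, or it is not, in which case both are infinite. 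Your route instead uses Lemma~\ref{l:cociclo} directly, pushing the $i$-counting integral to the $j$-counting integral piecewise along the Borel isomorphisms $f_{ijm}$ and invoking holonomy invariance of $\LB$ as the pushforward identity $\LB_i|_{B_m}\mapsto\LB_j|_{f_{ijm}(B_m)}$. This is more explicit about \emph{why} the two counting integrals transform into each other, at the price of more measure-theoretic bookkeeping; the paper's stratification avoids that bookkeeping by reducing everything to generalized transversals plus a null-or-infinite alternative.

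One justification in your sketch is not right as stated and deserves attention: you claim the disjointified pieces of $B$ adapted to the decomposition $\bigcup_m(B_m\times W_m)$ stay in the common domain $\pi_i(\BB^*,\BB_{\LB_i})$ ``by Propositions~\ref{p:propiedadunion} and \ref{p:sgcompactos}''. Those results give closure under countable unions and membership for sets with \sg-compact plaque sections; they do not give closure of $\pi_i(\BB^*,\BB_{\LB_i})$ under intersection with the Borel cylinders $B_m\times W_m$, and certainly not under the set differences produced by disjointification (the domain is defined only through intersections with \emph{open} sets, and projections of arbitrary Borel pieces need not be \LB-measurable when \LB\ is not \sg-finite, which is the whole difficulty of the paper). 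Fortunately you do not need this: Proposition~\ref{p:extension en productos} makes $\widetilde{\LB}_i$ and $\widetilde{\LB}_j$ countably additive on \emph{all} Borel sets, and Lemma~\ref{l:coincidencia en proyectores} applies to each Borel piece separately, so a piece is either in both domains (where your change-of-variables computation applies) or in neither (where both extensions assign $\infty$ by definition); summing over the pieces then finishes the argument. With that repair your proof is complete, though different in mechanism from the one in the paper.
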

\begin{proof}
We remark that $\widetilde{\LB}_i$ and $\widetilde{\LB}_j$ have
the same values in generalized transversals of $U_i\cap U_j$. By
Lemma \ref{l:coincidencia en proyectores}, we only consider Borel
sets in $\pi_i(\BB^*,\BB_\LB)$. Suppose that $\pi_i(B)$ is a Borel
set; otherwise, $\pi_i(B)$ is \LB-measurable and we can choose a
Borel set $A\subset \pi_i(B)$ with $\LB(\pi_i(B)\setminus A)=0$.
We take the Borel set $\widetilde{B}=B\cap\pi_i^{-1}(A)$. This
Borel set projects onto the Borel set $A$ and
$\widetilde{\LB}_i(B\setminus\widetilde{B})=\widetilde{\LB}_j(B\setminus\widetilde{B})=0$
by Definition~\ref{d:extension} and Lemma~\ref{l:coincidencia en
proyectores}, hence
$\widetilde{\LB}_i(B)=\widetilde{\LB}_i(\widetilde{B})$ and
$\widetilde{\LB}_j(B)=\widetilde{\LB}_j(\widetilde{B})$. Let
$$B^k=\{\,t\in T_i\ |\ \#(\varphi_i(B)\cap
(\{t\}\times\R^n))=k\,\}\;,\qquad k\in\N\cup\{\infty\}\;.$$ These are
\LB-measurable sets by Proposition \ref{p:biendef}, and we assume
that they are Borel sets by the same reason as above. Let
$\widetilde{B}^k$ denote
$B\cap\pi_i^{-1}(\varphi_i^{-1}(B^k\times\{\ast\}))$, which is a
Borel set. It is obvious that
$\bigcup_{i=1}^\infty\widetilde{B}^k$ is a generalized
transversal, hence
$\widetilde{\LB}_i(\bigcup_{k=1}^\infty\widetilde{B}^k)=\widetilde{\LB}_j(\bigcup_{k=1}^\infty\widetilde{B}^k)$.
Now consider
$$\widetilde{B}^\infty_l=\{\,x\in\widetilde{B}^\infty\ |\
\#(B\cap P^j_x)=l\,\}\;,\qquad l\in\N\cup\{\infty\}\;,$$ where $P^j_x$
denotes the plaque of $U_j$ that contains $x$. The proof is
finished in the case $\LB(\pi_i(\widetilde{B}^\infty_\infty))=0$
(we can restrict to the case of a generalized transversal). If
$\LB(\pi_i(\widetilde{B}^\infty_\infty))>0$, then
$\LB(\pi_j(\widetilde{B}^\infty_\infty))>0$. Therefore we
obviously obtain
$\widetilde{\LB}_i(B)=\infty=\widetilde{\LB}_j(B)$.
\end{proof}

\begin{defn}\label{d:medida final}
Let $B$ be a measurable set in $X$, and $$B_1 = B\cap U_1\;,\ B_k
= (B\cap U_k)\setminus (B_1\cup...\cup B_{k-1})\;,$$ for $k\geq
2$. Define
$$\widetilde{\LB}(B)=\sum_{i=1}^{\infty}\widetilde{\LB}_i(B_i)\,.$$
\end{defn}

By Lemma~\ref{l:independencia de cartas}, it is easy to prove that
Definition~\ref{d:medida final} does not depend neither on the
ordering of the charts nor on the choice of the countable foliated
measurable atlas. It is also easy to prove that $\widetilde{\LB}$
extends \LB\ since both of them have the same values on
generalized transversals contained in each chart and, hence, in
every generalized transversal. Theorem \ref{t:extension final} is
now established.

\begin{defn}
Let $\mu$ be an extension of a transverse invariant measure $\LB$
on a foliated measurable space $X$. The measure $\mu$ is called a
{\em coherent extension\/} of $\LB$ if it is a coherent extension
on each foliated measurable chart with the induced transverse
invariant measure.
\end{defn}

\begin{cor}
The extension $\widetilde{\LB}$ is the unique coherent extension
of \LB.
\end{cor}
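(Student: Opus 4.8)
The plan is to deduce the uniqueness of the coherent global extension from the uniqueness already proved in the product case (Proposition~\ref{p:unicidad en productos}). Let $\mu$ be any coherent extension of $\LB$ on $X$. By definition, the restriction $\mu|_{U_i}$ is a coherent extension of the transverse invariant measure $\LB_i$ induced on the product chart $U_i\thickapprox T_i\times\R^n$; hence by Proposition~\ref{p:unicidad en productos} we have $\mu|_{U_i}=\widetilde{\LB}_i$ for every $i\in\N$. In particular $\mu$ and $\widetilde{\LB}$ agree on every measurable subset of each chart $U_i$.

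First I would reduce an arbitrary measurable $B\subset X$ to its pieces in the charts, using the disjoint decomposition $B_1=B\cap U_1$, $B_k=(B\cap U_k)\setminus(B_1\cup\dots\cup B_{k-1})$ of Definition~\ref{d:medida final}. Each $B_k$ is a measurable subset of $U_k$, so by the previous paragraph $\mu(B_k)=\widetilde{\LB}_k(B_k)$. Since $\mu$ is a measure and $B=\bigsqcup_k B_k$ is a countable disjoint union, $\sigma$-additivity gives
\[
\mu(B)=\sum_{k=1}^{\infty}\mu(B_k)=\sum_{k=1}^{\infty}\widetilde{\LB}_k(B_k)=\widetilde{\LB}(B)\,,
\]
where the last equality is Definition~\ref{d:medida final}. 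Thus $\mu=\widetilde{\LB}$, which is the assertion. It remains only to note that $\widetilde{\LB}$ is itself a coherent extension: this is checked chart by chart, since on each $U_i$ it coincides with $\widetilde{\LB}_i$, which is coherent by Proposition~\ref{p:unicidad en productos}; and coherence on charts is, by definition, coherence of the global extension.

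The step requiring the most care is verifying that $\mu|_{U_i}$ genuinely satisfies conditions (a), (b), (c) of a coherent extension relative to $(T_i\times\R^n,\LB_i)$ — but this is essentially immediate from the definition of a coherent extension of a foliated measurable space, which was phrased precisely as ``coherent on each foliated measurable chart.'' One should also double-check that the restriction of a measure on $X$ to the measurable open set $U_i$ is again a measure on $U_i$ with its induced $\sigma$-algebra, which is routine. No transfinite or descriptive-set-theoretic difficulty arises here; the genuine obstacle (non-Borel projections of Borel sets) was already absorbed in Section~\ref{s:producto}, and the global case only has to patch the charts together via the $\sigma$-additivity of $\mu$.
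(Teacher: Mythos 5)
Your argument is correct and is exactly the intended one: the paper leaves this corollary without proof precisely because it follows, as you say, from Proposition~\ref{p:unicidad en productos} applied chart by chart, the $\sigma$-additivity of $\mu$, and Definition~\ref{d:medida final}. The only point worth making explicit is that the identification of $\widetilde{\LB}|_{U_i}$ with $\widetilde{\LB}_i$ (needed to see that $\widetilde{\LB}$ itself is coherent) uses Lemma~\ref{l:independencia de cartas} together with the $\sigma$-additivity of $\widetilde{\LB}_i$, which the paper has already established.
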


Theorem \ref{t:extension medida} gives a new interpretation of
transverse invariant measures. It can be also used to introduce the
following version of the concept of transversal for foliated
measurable spaces with transverse invariant measures.

\begin{defn}
Let $X$ be a foliated measurable space with a transverse invariant
measure \LB. A Borel subset of $X$ with finite
$\widetilde{\LB}$-measure is called a {\em \LB-generalized
transversal\/}.
\end{defn}

\begin{rem}
In Section \ref{s:producto}, we have only used that the plaques
are Polish spaces. We can weaken the conditions of foliated
measurable spaces taking charts with the form $T\times P$, where
$P$ is any connected and locally connected Polish space. In this
way our result can be extended to other interesting cases like
{\em measurable graphs} \cite{Bermudez}.
\end{rem}

\end{document}